\theoremstyle{plain}
\newtheorem{corollary}{\bf Corollary}
\newtheorem{definition}{\bf Definition}
\newtheorem{lemma}{\bf Lemma}
\newtheorem{proposition}{\bf Proposition}
\newtheorem{remark}{Remark}
\newtheorem{theorem}{\bf Theorem}
\theoremstyle{definition}
\numberwithin{equation}{section}
\title[Critical metrics of the volume functional]{On the volume functional of compact manifolds with boundary with harmonic Weyl tensor}
\author{H. Baltazar}
\author{R. Batista}
\author{K. Bezerra}
\address[H. Baltazar]{Departamento de Matem\'{a}tica, Universidade Federal do Piau\'{\i}\\
64049-550 Te\-re\-si\-na, Piau\'{\i}, Brazil.}
\email{halyson@ufpi.edu.br}
\address[R. Batista]{Departamento de Matem\'{a}tica, Universidade Federal do Piau\'{\i}\\
64049-550 Te\-re\-si\-na, Piau\'{\i}, Brazil.}
\email{rmarcolino@ufpi.edu.br}
\address[K. Bezerra]{Departamento de Matem\'{a}tica, Universidade Federal do Piau\'{\i}\\
64049-550 Te\-re\-si\-na, Piau\'{\i}, Brazil.}
\email{kelton@ufpi.edu.br}
\thanks{R. Batista was partially supported by CNPq/Brazil}
\subjclass[2010]{Primary 53C25, 53C20, 53C21; Secondary 53C65}
\keywords{Volume functional; critical metrics; Bach-flat metrics; harmonic Weyl tensor}
\date{July 28, 2017}
\begin{document}

\newcommand{\spacing}[1]{\renewcommand{\baselinestretch}{#1}\large\normalsize}
\spacing{1.2}

\begin{abstract}
One of the main aims of this article is to give the complete classification of critical metrics of the volume functional on a compact manifold $M$ with boundary $\partial M$ and with harmonic Weyl tensor, which improves the corresponding classification for complete locally conformally flat case, due to Miao and Tam~\cite{miaotamTAMS}. In particular, we prove that a critical metric with harmonic Weyl tensor on a simply connected compact manifold with boundary isometric to a standard sphere $\mathbb{S}^{n-1}$ must be isometric to a geodesic ball in a simply connected space form $\Bbb{R}^n,$ $\Bbb{H}^n$ and $\Bbb{S}^n.$ In order to achieve our goal, firstly we shall conclude the classification of such critical metrics under the Bach-flat assumption and then we will prove that both geometric conditions are indeed equivalent.
\end{abstract}

\maketitle

\section{Introduction}
\label{intro}
The study of the critical metrics constitutes a classical and fruitful theme in the theory of geometric analysis. Into this branch much attention has been given to study the critical points of the volume functional on the space of constant scalar curvature metrics with a prescribed boundary metric. In 2009, Miao and Tam \cite{miaotam} investigated variational properties of the volume functional constrained to the space of metrics of constant scalar curvature on a given compact manifold with prescribed boundary. In this context, they established sufficient conditions for a metric to be a critical point. More precisely, let $M^{n}$ ($n\geq3$) be a connected, compact $n$-dimensional manifold with smooth boundary $\partial M$ (possibly disconnected) which is endowed with a fixed metric $\gamma,$ and let $\mathcal{M}^{R}_{\gamma}$ be the subset of Riemannian metrics $g$ with constant scalar curvature $R$ and such that $g|_{\partial M}=\gamma.$ The authors proved that, if the first Dirichlet eigenvalue of $(n-1)\Delta_{g}+R$ on $M$ is positive, then $g$ is a critical point of the volume functional in $\mathcal{M}^{R}_{\gamma}$ if and only if there is a smooth function $f$ on $M$ such that  $f|_{\partial M}=0$ and satisfies the follows equation
\begin{equation}\label{eqMiaoTam1}
-(\Delta f)g+Hess_{g} f-fRic_{g}=g,
\end{equation}
where $Ric$ and $Hess f$ stands, respectively, for the Ricci tensor and Hessian of $f.$

Following the terminology used in \cite{miaotam,miaotamTAMS} we recall the definition of a Miao-Tam critical metrics.

\begin{definition}\label{eq:miaotam}
A Miao-Tam critical metric is a 3-tuple $(M^n,\,g,\,f),$ where $(M^{n},\,g),$ $n\geq3$, is a compact Riemannian manifold with a smooth boundary $\partial M$ and $f: M^{n}\to \Bbb{R}$ is a smooth function such that $f|_{\partial M}=0$ and satisfies the equation (\ref{eqMiaoTam1}).
\end{definition}

\begin{remark}
A fundamental property of a Miao-Tam critical metrics is that its scalar curvature $R_{g}$ is a constant (see Proposition 2.1 in \cite{CEM} or Theorem 7 in \cite{miaotam}).
\end{remark}

Miao and Tam in \cite{miaotam} showed that the only domains in space forms, on which the standard metrics are critical points, are geodesic balls. Based on this (cf. page 156 in \cite{miaotam}), the following question was posed:\\
{\it``It is natural to ask whether they are the only critical points with that boundary condition."}\\
Despite some recent progress, only partial answers have been given to address this issue, see, for instance,  Corollary 3 in \cite{miaotam} and some recent results that we will describe further below. We also remark that Miao and Tam in \cite{miaotamTAMS} constructed nontrivial examples of such critical metrics with connected boundary isometric to a standard sphere which were not geodesic balls in a space form, however the manifolds found were not simply connected. Here, we shall give a partial answer for this problem under harmonicity of Weyl tensor (see Corollary~\ref{thmB}).

In 2011, Miao and Tam  studied under which conditions critical metrics should be warped products. In this case, they were able to construct explicit examples of critical metrics which are in the form of warped products and then classified such critical metrics under the assumption of being locally conformally flat (for more details, see Theorem 1.2 in \cite{miaotamTAMS}). Furthermore, as a consequence of this classification, they proved that a locally conformally flat, simply connected, compact Miao-Tam critical metric with boundary isometric to a standard sphere $\mathbb{S}^{n-1}$ must be isometric to a geodesic ball in a simply connected space form $\mathbb{R}^{n},$ $\mathbb{H}^{n}$ or $\mathbb{S}^{n}.$ In the same article, Miao and Tam studied these critical metrics under Einstein condition, where they were able to remove the condition of boundary isometric to a standard sphere. More recently, the first author and Ribeiro Jr. improved this last result for the Ricci parallel case (see \cite{baltazar} for more details).

Barros, Di\'ogenes and Ribeiro in \cite{BDR}, based on the work of Cao and Chen in \cite{CaoChen}, classified these critical metrics in four dimensional under Bach-flat condition. Nevertheless the authors considered just the case when the Riemannian manifold is simply connected. More precisely, for a $4$-dimensional simply connected manifold with boundary isometric to a standard sphere $\mathbb{S}^{3}$, they replaced the assumption of locally conformally flat in the Miao-Tam result (cf. \cite[Corollary 4.1]{miaotamTAMS}) by the Bach-flat condition. Later on, Kim and Shin \cite{Kim} also in $4$-dimensional case, proved that a  simply connected, compact Miao-Tam critical metric with boundary isometric to a standard sphere $\mathbb{S}^3$ must be isometric to a geodesic ball in a simply connected space form  $\mathbb{R}^{4}$, $\mathbb{H}^{4}$ or $\mathbb{S}^{4}$, provided that the manifold has harmonic curvature. At this point it is important to remember that, if a Riemannian manifold with constant scalar curvature has harmonic curvature, then its Weyl tensor must be harmonic (the converse of this fact is also true for manifold with constant scalar curvature). For more results, please refer to \cite{baltazar,balt17,adam,BDR,BDRR,miaotam,miaotamTAMS}.

Hence, inspired by the above description we shall prove that, in fact, the two geometric conditions are equivalent, that is, a Miao-Tam critical metric has vanishing Bach tensor if and only if its Weyl tensor is harmonic. We emphasize that one of these implications was proved in \cite{BDR}, namely, the authors showed that a Bach-flat Miao-Tam critical metric must have vanishing Cotton tensor, but this is equivalent to harmonicity of Weyl tensor (cf. Lemma 4 combined with Lemma 5 in the referred paper). In order to establish our main result, we will first investigate the Bach-flat Miao-Tam critical metrics on $n$-dimensional manifolds with boundary. In this case, based on the techniques developed in the work of Miao and Tam \cite{miaotamTAMS} and motivated by the work of Qing and Yuan in \cite{QY}, we shall deduce the complete classification of such critical metrics under Bach-flat assumption, see Theorem~\ref{thmbach} in Section 3.

Let us highlight that, the Bochner type formulas has been applied to various problems in global Riemannian geometry. One important application was given recently by the first author and Ribeiro Jr. in \cite{balt17} where, in particular, they obtained some classification results for 3-dimensional Miao-Tam critical metrics. More precisely, in 3-dimensional case, a Miao-Tam critical metric with nonnegative sectional curvature must be isometric to a geodesic ball in a simply connected space form  $\mathbb{R}^{3}$ or $\mathbb{S}^{3}$ (see \cite[Theorem 3]{balt17} for more details).

A well-known fact is that if a $n$-dimensional manifold with $n\geq4,$ is either Einstein, locally conformally flat or has parallel Ricci tensor, then it has harmonic Weyl tensor. Therefore, it is natural to ask what happens to the geometry of the Miao-Tam critical metrics with harmonic Weyl tensor. To do so, as application of the Bochner type formula obtained in \cite{balt17}, we shall provide some important lemmas in order to prove that such critical metrics must satisfy $W(\cdot,\nabla f,\cdot,\nabla f)=0$ on $M,$ where $f$ is its potential function. Finally, we will be able to classify such critical metrics under harmonicity of Weyl tensor. More precisely, we will prove the following result.

\begin{theorem}\label{thmdivW}
Let $(M^n,\,g,\,f)$ be a Miao-Tam critical metric with harmonic Weyl tensor. Suppose that the first Dirichlet eigenvalue of $(n-1)\Delta_{g}+R$ is nonnegative, where $R$ is the scalar curvature of $g$.
\begin{enumerate}
  \item[(i)] If $\Sigma$ is disconnected, then $\Sigma$ has exactly two connected components and $(M,g)$ is isometric to $(I\times N,ds^{2}+r^{2}h)$ where $I$ is a finite interval in $\mathbb{R}$ containing the origin $0$, $(N,h)$ is a compact (without boundary) Einstein manifold with $Ric=(n-2)\kappa_{0}h,$ for some constant $\kappa_{0}$, $r$ is a positive function on $I$ satisfying $r'(0)=0$ and
$$r''+\frac{R}{n(n-1)}r=ar^{1-n}$$
for some constant $a>0$, and the constant $\kappa_{0}$ satisfies
$$(r')^{2}+\frac{R}{n(n-1)}r^{2}+\frac{2a}{n-2}r^{2-n}=\kappa_{0}.$$
  \item[(ii)] If $\Sigma$ is connected, then $(M,g)$ is either isometric to a geodesic ball in a simply connected space form $\mathbb{R}^{n}$, $\mathbb{H}^{n}$, $\mathbb{S}^{n}$, or $(M,g)$ is covered by one of the above mentioned warped product in (i) with a covering group $\mathbb{Z}_{2}.$
\end{enumerate}
\end{theorem}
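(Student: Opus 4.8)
The plan is to reduce the harmonic-Weyl hypothesis to the Bach-flat one and then invoke the classification already obtained under the Bach-flat condition, namely Theorem~\ref{thmbach}. The first step is to prove that, for a Miao-Tam critical metric, the Weyl tensor is harmonic if and only if the Bach tensor vanishes. The implication ``Bach-flat $\Rightarrow$ harmonic Weyl'' is available from \cite{BDR}, since a Bach-flat Miao-Tam metric has vanishing Cotton tensor and, for constant scalar curvature, this is equivalent to $\operatorname{div}W=0$. For the converse I would start from the trace of (\ref{eqMiaoTam1}), which yields $\Delta f=-\tfrac{n+fR}{n-1}$ and hence the pointwise identity $Hess\, f=(1+\Delta f)\,g+f\,Ric$, and combine it with the Bochner-type formula of \cite{balt17}. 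The key lemmas should then give $W(\cdot,\nabla f,\cdot,\nabla f)=0$ on $M$ together with the fact that $\nabla f$ spans an eigenspace of $Ric$ whose orthogonal complement is also an eigenspace. In dimension $n\ge 4$ one has $B_{ij}=\tfrac{1}{n-3}\nabla^{k}\nabla^{l}W_{ikjl}+\tfrac{1}{n-2}R^{kl}W_{ikjl}$; harmonic Weyl annihilates the first term, while the two-eigenvalue structure of $Ric$, the identity $W(\cdot,\nabla f,\cdot,\nabla f)=0$, and the total trace-freeness of $W$ force $R^{kl}W_{ikjl}=0$. This gives $B\equiv 0$ and completes the equivalence.

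Granting the equivalence, Theorem~\ref{thmbach} applies and the present statement follows. If one instead unwinds Theorem~\ref{thmbach} directly for the harmonic-Weyl case, the same two ingredients---$W(\cdot,\nabla f,\cdot,\nabla f)=0$ and $\operatorname{div}W=0$---govern the local geometry. On the regular set $\{\nabla f\neq 0\}$ the relation $Hess\, f=(1+\Delta f)\,g+f\,Ric$ shows that the level sets of $f$ are totally umbilical with mean curvature constant on each level, so the metric splits locally as a warped product $ds^{2}+r^{2}h$ with $f=f(s)$ and $r=r(s)$. Restricting the harmonic-Weyl condition to the fibers shows $(N,h)$ is Einstein with $Ric_{h}=(n-2)\kappa_{0}h$, and substituting the warped product into (\ref{eqMiaoTam1}) produces the ODE $r''+\tfrac{R}{n(n-1)}r=ar^{1-n}$ together with the first-integral relation $(r')^{2}+\tfrac{R}{n(n-1)}r^{2}+\tfrac{2a}{n-2}r^{2-n}=\kappa_{0}$; the normalization $r'(0)=0$ and the positivity of $a$ follow from smoothness at an interior critical value of $f$ and from the boundary conditions.

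The remaining work is global and topological, and this is where the main difficulty lies. Since $f|_{\partial M}=0$, the nonnegativity of the first Dirichlet eigenvalue of $(n-1)\Delta_{g}+R$ controls the sign of $f$ and hence the location of its critical points, as in \cite{miaotamTAMS}. When $f$ has an interior critical point at which the fiber does not collapse, the radial variable $s$ ranges over a closed interval whose two endpoints yield two boundary components, giving the two-ended warped product of case~(i). When $f$ attains an interior critical point where the fiber collapses ($r\to 0$), the warped product must close up smoothly, which forces the fiber to be a round sphere and $(M,g)$ to be a geodesic ball in a space form. The subtle point---the main obstacle---is the connected-boundary case~(ii): one must account for configurations in which $\partial M$ is a single component while the universal cover is the two-ended warped product of (i). This is precisely where the $\mathbb{Z}_{2}$-covering alternative enters, produced by an involution exchanging the two ends, and matching these quotients with the nontrivial examples of \cite{miaotamTAMS} is the delicate part of the classification.
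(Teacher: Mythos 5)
Your overall architecture is the same as the paper's: show that harmonic Weyl implies Bach-flat, then quote Theorem~\ref{thmbach}. Your proposed ending is in fact slightly cleaner than the paper's: once one knows $W(\cdot,\nabla f,\cdot,\nabla f)\equiv 0$ on $M$ and that $Ric$ splits with $\nabla f$ as an eigenvector and $\nabla f^{\perp}$ as an eigenspace with eigenvalue $\lambda$, then $R^{kl}W_{ikjl}=(R_{11}-\lambda)W_{i1j1}=0$, and since $divW=0$ kills $\nabla^{k}\nabla^{l}W_{ikjl}$, one gets $B\equiv 0$ directly; the paper instead shows $T\equiv 0$ and invokes Lemma~\ref{bachCotton}, and both endings are valid.

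The genuine gap is the sentence ``the key lemmas should then give $W(\cdot,\nabla f,\cdot,\nabla f)=0$ on $M$'': this asserts, without proof, precisely the hard part of the theorem. The vanishing of $W^{\nu}$ is not a pointwise algebraic consequence of $divW=0$ plus the Bochner formula; in the paper it takes the entire chain consisting of the identity (\ref{normafRIC}), the derivative formulas of Lemmas~\ref{nablafRic} and~\ref{nablafW}, the divergence identity $\frac{n-2}{n-1}{\rm div}(|W^{\nu}|^{2}\nabla f)=-fW_{ijkl}R_{ik}R_{jl}$ of Lemma~\ref{divWnu} (whose proof itself needs Proposition~\ref{propA} and Lemma~\ref{deltaRICCI}), then a boundary argument on the collar $V_{\epsilon}=\{0\leq f\leq\epsilon\}\cap M_{0}$ (vanishing of $W^{\nu}$ and of $W_{ijkl}R_{ik}R_{jl}$ on $\Sigma_{0}$, the perturbation $\zeta+|W^{\nu}|^{2}$ to get ${\rm div}(|W^{\nu}|^{2}\nabla f)\leq 0$ near the boundary, and two applications of Stokes' theorem), and finally real analyticity of $(g,f)$ in harmonic coordinates to propagate $W^{\nu}\equiv 0$ from the collar to all of $M$. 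None of this appears in your proposal, so the reduction to the Bach-flat case --- and hence the appeal to Theorem~\ref{thmbach} --- is unsupported. Note also that the two-eigenvalue structure of $Ric$ you list as an input is itself equivalent to $W^{\nu}=0$, via $\mathring{R}_{ij}+\frac{\mathring{R}_{11}}{n-1}g_{ij}+\frac{n-2}{n-1}W^{\nu}_{ij}=0$ for $i,j\geq 2$ (equation (\ref{auxRICCI})), so it cannot serve as an independent hypothesis; and your last two paragraphs, which re-derive the warped-product splitting and the global dichotomy, merely restate what Theorem~\ref{thmbach} already provides rather than supplying the missing step.
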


As an immediate consequence of Theorem \ref{thmdivW} we deduce the following rigidity result.

\begin{corollary}\label{thmB}
Let $(M^n,\,g,\,f)$ be a simply connected, Miao-Tam critical metric with harmonic Weyl tensor and boundary isometric to a standard sphere $\mathbb{S}^{n-1}.$  Then $(M^n,\,g)$ is isometric to a geodesic ball in a simply connected space form $\Bbb{R}^{n},$ $\Bbb{H}^{n},$ or $\Bbb{S}^{n}.$
\end{corollary}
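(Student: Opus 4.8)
The plan is to read the corollary off the dichotomy of Theorem~\ref{thmdivW}, the only additional data being that $M$ is simply connected and that its boundary $\Sigma=\partial M$ is isometric to the round sphere $\mathbb{S}^{n-1}$. Before invoking the theorem I would make sure its spectral hypothesis—nonnegativity of the first Dirichlet eigenvalue of $(n-1)\Delta_{g}+R$—is in force; for the metrics under consideration this is inherited from the Miao--Tam characterization recalled in the Introduction, so no extra work is needed there.

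The first genuine step is to observe that $\mathbb{S}^{n-1}$ is connected, hence $\Sigma$ has a single connected component. This excludes conclusion (i) of Theorem~\ref{thmdivW}, in which $\Sigma$ is forced to have exactly two components, and places us in case (ii). That case presents a clean alternative: either $(M,g)$ is already isometric to a geodesic ball in one of the space forms $\mathbb{R}^{n}$, $\mathbb{H}^{n}$, $\mathbb{S}^{n}$—which is exactly what we want—or $(M,g)$ is doubly covered, with covering group $\mathbb{Z}_{2}$, by one of the warped products $(I\times N,\,ds^{2}+r^{2}h)$ appearing in part (i).

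To conclude, I would eliminate the second alternative by topology: a simply connected manifold coincides with its own universal cover and therefore admits no connected covering of degree greater than one, so it cannot serve as the base of a nontrivial $\mathbb{Z}_{2}$-cover. Thus the warped-product alternative is vacuous and only the space-form ball survives. There is no analytic obstacle here—the substance of the argument lives entirely in Theorem~\ref{thmdivW}—and the single point demanding care is this last topological observation, namely spelling out why being covered by $\tilde{M}$ with deck group $\mathbb{Z}_{2}$ contradicts $\pi_{1}(M)=0$.
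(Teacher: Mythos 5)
Correct, and essentially identical to the paper's own (implicit) proof: the paper deduces this corollary immediately from Theorem~\ref{thmdivW}, precisely by noting that $\mathbb{S}^{n-1}$ is connected (which excludes case (i)) and that a simply connected $M$ admits no nontrivial connected $\mathbb{Z}_{2}$-covering (which excludes the warped-product alternative in case (ii)). The one soft spot is your justification of the spectral hypothesis---the Miao--Tam characterization \emph{assumes} positivity of the first Dirichlet eigenvalue of $(n-1)\Delta_{g}+R$ rather than deriving it from the critical metric equation---but the paper itself carries that condition as a standing assumption throughout, so this does not affect the substance of the argument.
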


\begin{remark}
Recently, the first author, R. Di\'ogenes and Ribeiro Jr. \cite{BaltRR} motivated by \cite{Kim} proved that a simply connected Miao-Tam critical metric satisfying the second order divergence-free Weyl tensor condition and boundary isometric to a standard sphere $\mathbb{S}^{3}$ must be isometric to a geodesic ball in a simply connected space form $\Bbb{R}^{4},$ $\Bbb{H}^{4},$ or $\Bbb{S}^{4}.$ Therefore, taking into account \cite[Lemma 4]{BaltRR} and Corollary~\ref{thmB} above, it is not difficult to check that their result must be true in all dimensions.

\end{remark}

\section{Background}
\label{Preliminaries}

In this section we will recall some information and basic results that will be useful in the proof of our main theorem. We remember that the fundamental equation of Miao-Tam critical metric, i.e.,
\begin{equation}\label{eqfund1}
-(\Delta f)g+Hess f-fRic=g.
\end{equation}
Taking the trace of (\ref{eqfund1}) we arrive at
\begin{equation}
\label{eqtrace} \Delta f +\frac{fR+n}{n-1}=0.
\end{equation}
Moreover, we recall that for operators $S,T:\mathcal{H} \to \mathcal{H}$ defined over an $n$-dimensional Hilbert space $\mathcal{H}$ the Hilbert-Schmidt inner product is defined according to
\begin{equation}
\langle S,T \rangle =\rm tr\big(ST^{*}\big), \label{inner}
\end{equation}
where $\rm tr$ and $*$ denote, respectively, the trace and the adjoint operation. Moreover, if $I$ denotes the identity operator on $\mathcal{H}$ the traceless operator of $T$ is given by
 \begin{equation}
 \label{eqtr1}
\mathring{T}=T - \frac{\rm tr T}{n}I.
 \end{equation}

By using (\ref{eqtrace}) it is not difficult to check that
\begin{equation}
\label{IdRicHess} f\mathring{Ric}=\mathring{Hess f}.
\end{equation}

In order to proceed, we recall some special tensors as well as some terminology in the study of curvature for a Riemannian manifold $(M^n,\,g),\,n\ge 3.$ The Weyl tensor $W$ is defined by the following decomposition formula
\begin{eqnarray}
\label{weyl}
R_{ijkl}&=&W_{ijkl}+\frac{1}{n-2}\big(R_{ik}g_{jl}+R_{jl}g_{ik}-R_{il}g_{jk}-R_{jk}g_{il}\big) \nonumber\\
 &&-\frac{R}{(n-1)(n-2)}\big(g_{jl}g_{ik}-g_{il}g_{jk}\big),
\end{eqnarray}
where $R_{ijkl}$ stands for the Riemannian curvature operator. It is important to detach that the Weyl tensor has the same symmetries properties of the Riemann tensor and it has trace-free in any two indices.

Moreover, the Cotton tensor $C$ is defined as follows
\begin{equation}
\label{cotton}
C_{ijk}=\nabla_{i}R_{jk}-\nabla_{j}R_{ik}-\frac{1}{2(n-1)}\big(\nabla_{i}R g_{jk}-\nabla_{j}R
g_{ik}).
\end{equation}
When $n\geq 4$ we have
\begin{equation}
\label{cottonwyel} C_{ijk}=-\frac{(n-2)}{(n-3)}\nabla_{l}W_{ijkl}.
\end{equation}

An important remark about Cotton tensor $C_{ijk}$ is that it is skew-symmetric in the first two indices and
trace-free in any indices, that is,
\begin{equation}
\label{cottonprop} C_{ijk}=-C_{jik} \,\,\,\,\,\, {\rm and} \,\,\,\,\,\, g^{ij}C_{ijk}=g^{ik}C_{ijk}=0.
\end{equation}
For more details about these tensors we address to \cite{besse}.

Finally, we recall the well-known Bach tensor that was introduced by Bach in \cite{bach}. On a Riemannian manifold $(M^n,g)$, $n\geq 4,$ the Bach tensor is defined in term of the components of the Weyl tensor $W_{ikjl}$ as follows
\begin{equation}
\label{bach} B_{ij}=\frac{1}{n-3}\nabla_{k}\nabla_{l}W_{ikjl}+\frac{1}{n-2}R_{kl}W_{ikjl},
\end{equation}
while for $n=3$ it is given by
\begin{equation}
\label{bach3} B_{ij}=\nabla_{k}C_{kij}.
\end{equation} We say that $(M^n,g)$ is Bach-flat when $B_{ij}=0.$ It is easy to check that locally conformally flat metrics as well as Einstein metrics are Bach-flat. It is worth to point out that in dimension $4$, we have that, on any compact manifold $(M^{4},g)$, Bach-flat metrics are precisely the critical points of the {\it conformally invariant} functional on the space of the metrics,
$$\mathcal{W}(g)=\int_{M}|W_{g}|^{2}dV_{g}.$$
For more details see, for example \cite{besse} or \cite{derd1}.

Next, from commutation formulas for first covariant derivative of the Ricci curvature, for any Riemannian manifold $M^n,$ we have
\begin{equation}\label{idRicci}
\nabla_{i}\nabla_{j}R_{pq}-\nabla_{j}\nabla_{i}R_{pq}=R_{ijps}R_{sq}+R_{ijqs}R_{ps}.
\end{equation}
For more details see \cite{chow,Via}.

With this notation in mind, we may deduce the following formula for the laplacian of the norm of the Ricci tensor. Since the proof is short, for the sake of completeness, we include it.

\begin{lemma}\label{deltaRICCI}
Let $(M^{n},g)$ be a Riemannian manifold. Then we have:
\begin{eqnarray*}
\Delta |Ric|^{2}&=&2|\nabla Ric|^{2}+2\nabla_{p}(C_{pij}R_{ij})-|C_{ijk}|^{2}+2(R_{ij}R_{ik}R_{jk}-R_{ik}R_{jl}R_{ijkl})\\
&&-\frac{n}{2(n-1)}|\nabla R|^{2}+\frac{1}{n-1}div((n-2)Ric(\nabla R)+R\nabla R).
\end{eqnarray*}
In particular, if $(M,g)$ has a constant scalar curvature, the expression above becomes
\begin{eqnarray*}
\Delta |Ric|^{2}&=&2|\nabla Ric|^{2}+2\nabla_{p}(C_{pij}R_{ij})-|C_{ijk}|^{2}+2(R_{ij}R_{ik}R_{jk}-R_{ik}R_{jl}R_{ijkl}).
\end{eqnarray*}
\end{lemma}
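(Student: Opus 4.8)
The plan is to run the Bochner technique. Since the Levi--Civita connection is metric, the product rule gives
\[
\Delta|Ric|^{2}=2|\nabla Ric|^{2}+2R_{ij}\Delta R_{ij},
\]
where $\Delta R_{ij}=\nabla_{p}\nabla_{p}R_{ij}$ is the rough Laplacian, so the whole problem reduces to finding a workable expression for $R_{ij}\Delta R_{ij}$. First I would extract $\Delta R_{ij}$ by taking the divergence $\nabla_{p}$ of the Cotton tensor written with first index $p$, namely (see (\ref{cotton})) $C_{pij}=\nabla_{p}R_{ij}-\nabla_{i}R_{pj}-\frac{1}{2(n-1)}(\nabla_{p}R\,g_{ij}-\nabla_{i}R\,g_{pj})$. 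Differentiating and contracting the metric factors yields $\nabla_{p}C_{pij}=\Delta R_{ij}-\nabla_{p}\nabla_{i}R_{pj}-\frac{1}{2(n-1)}(\Delta R\,g_{ij}-\nabla_{i}\nabla_{j}R)$, so that everything now hinges on rewriting the mixed term $\nabla_{p}\nabla_{i}R_{pj}$.

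To handle it I would commute the two derivatives by means of (\ref{idRicci}) and use the twice--contracted second Bianchi identity $\nabla_{p}R_{pj}=\tfrac12\nabla_{j}R$. Contracting the Riemann tensor appearing in (\ref{idRicci}) produces
\[
\nabla_{p}\nabla_{i}R_{pj}=\tfrac12\nabla_{i}\nabla_{j}R+R_{is}R_{sj}+R_{pijs}R_{ps},
\]
and substituting back isolates $\Delta R_{ij}$ in terms of $\nabla_{p}C_{pij}$, a Hessian-of-$R$ piece, and two quadratic curvature expressions. Contracting with $R_{ij}$, I would split $R_{ij}\nabla_{p}C_{pij}=\nabla_{p}(C_{pij}R_{ij})-C_{pij}\nabla_{p}R_{ij}$, and here the crucial identity is $C_{pij}\nabla_{p}R_{ij}=\tfrac12|C_{ijk}|^{2}$: solving (\ref{cotton}) for $\nabla_{p}R_{ij}$, the trace terms drop out because $C$ is trace-free by (\ref{cottonprop}), while the surviving term reduces to $-C_{pij}\nabla_{p}R_{ij}$ after relabeling dummy indices and applying the skew-symmetry $C_{pij}=-C_{ipj}$, which forces the factor $\tfrac12$. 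This accounts for the $2\nabla_{p}(C_{pij}R_{ij})$ and $-|C_{ijk}|^{2}$ contributions.

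The contracted quadratic terms give $R_{ij}R_{is}R_{sj}=R_{ij}R_{ik}R_{jk}$ and $R_{ij}R_{pijs}R_{ps}=-R_{ik}R_{jl}R_{ijkl}$, using the pair symmetry and antisymmetries of the Riemann tensor together with the curvature conventions fixed by (\ref{weyl}). I expect this index bookkeeping to be the main, though purely technical, obstacle: one must keep precise track of which pairs of indices are being contracted so that the cubic terms come out with exactly the signs displayed in the statement.

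It then remains to repackage the scalar-curvature terms, which after the previous steps assemble into $\frac{n-2}{n-1}R_{ij}\nabla_{i}\nabla_{j}R+\frac{1}{n-1}R\Delta R$. For this I would invoke the two divergence identities $\operatorname{div}(Ric(\nabla R))=R_{ij}\nabla_{i}\nabla_{j}R+\tfrac12|\nabla R|^{2}$, again via $\nabla_{i}R_{ij}=\tfrac12\nabla_{j}R$, and $\operatorname{div}(R\nabla R)=R\Delta R+|\nabla R|^{2}$. Collecting the coefficients of $|\nabla R|^{2}$ produces precisely $-\frac{n}{2(n-1)}$, while the two divergences combine into $\frac{1}{n-1}\operatorname{div}\big((n-2)Ric(\nabla R)+R\nabla R\big)$; multiplying the whole contraction by $2$ then gives the general formula. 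Finally, the constant-scalar-curvature case is immediate: imposing $\nabla R\equiv0$ annihilates the last two terms and leaves the stated simplified identity.
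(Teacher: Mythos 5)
Your proposal is correct and follows essentially the same route as the paper's proof: both reduce $\Delta|Ric|^{2}$ via the Bochner-type expansion, trade $\Delta R_{ij}$ for $\nabla_{p}C_{pij}$ using the Cotton tensor, commute derivatives with (\ref{idRicci}) and the twice-contracted Bianchi identity, invoke the identity $C_{pij}\nabla_{p}R_{ij}=\tfrac12|C_{ijk}|^{2}$ from the trace-free and skew-symmetry properties of $C$, and repackage the scalar-curvature terms with the same two divergence identities. The only difference is cosmetic (you differentiate $C_{pij}$ first and then contract with $R_{ij}$, whereas the paper substitutes the Cotton expression inside the contraction), and your sign bookkeeping, including $R_{ij}R_{ps}R_{pijs}=-R_{ik}R_{jl}R_{ijkl}$ and the coefficient $-\frac{n}{2(n-1)}$, matches the statement.
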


\begin{proof}
To begin with, we use (\ref{cotton}) to arrive at
\begin{eqnarray*}
\frac{1}{2}\Delta|Ric|^{2}&=&|\nabla Ric|^{2}+R_{ij}\nabla_{p}\nabla_{p}R_{ij}\\
&=&|\nabla Ric|^{2}+R_{ij}\nabla_{p}(C_{pij}+\nabla_{i}R_{pj}+\frac{1}{2(n-1)}(\nabla_{p}Rg_{ij}-\nabla_{i}Rg_{pj}))\\
&=&|\nabla Ric|^{2}+R_{ij}\nabla_{p}C_{pij}+R_{ij}\nabla_{p}\nabla_{i}R_{pj}+\frac{1}{2(n-1)}(R\Delta R-\nabla_{i}\nabla_{j}RR_{ij}).
\end{eqnarray*}
Then, from (\ref{idRicci}) we obtain
\begin{eqnarray*}
\frac{1}{2}\Delta|Ric|^{2}&=&|\nabla Ric|^{2}+R_{ij}\nabla_{p}C_{pij}+\nabla_{i}\nabla_{p}R_{pj}R_{ij}+(R_{ij}R_{ik}R_{jk}-R_{ik}R_{jl}R_{ijkl})\\
&&+\frac{1}{2(n-1)}(R\Delta R-\nabla_{i}\nabla_{j}RR_{ij}),
\end{eqnarray*}
where we change some indices for simplicity.

Now, using the twice contracted second Bianchi identity, we immediately have
\begin{eqnarray*}
\frac{1}{2}\Delta|Ric|^{2}&=&|\nabla Ric|^{2}+R_{ij}\nabla_{p}C_{pij}+\frac{n-2}{2(n-1)}\nabla_{i}\nabla_{j}RR_{ij}+\frac{1}{2(n-1)}R\Delta R\\
&&+(R_{ij}R_{ik}R_{jk}-R_{ik}R_{jl}R_{ijkl})\\
&=&|\nabla Ric|^{2}+\nabla_{p}(C_{pij}R_{ij})-C_{pij}\nabla_{p}R_{ij}+(R_{ij}R_{ik}R_{jk}-R_{ik}R_{jl}R_{ijkl})\\
&&+\frac{n-2}{2(n-1)}(div(Ric(\nabla R))-\frac{1}{2}|\nabla R|^{2})+\frac{1}{2(n-1)}(div(R\nabla R)-|\nabla R|^{2})\\
&=&|\nabla Ric|^{2}+\nabla_{p}(C_{pij}R_{ij})-C_{pij}\nabla_{p}R_{ij}+(R_{ij}R_{ik}R_{jk}-R_{ik}R_{jl}R_{ijkl})\\
&&-\frac{n}{4(n-1)}|\nabla R|^{2}+\frac{1}{2(n-1)}div((n-2)Ric(\nabla R)-R\nabla R).
\end{eqnarray*}
To finalize, it suffices to use that the Cotton tensor is skew-symmetric in the first two indices and has trace-free in any two indices.

\end{proof}

Now, for a Miao-Tam critical metric, we recall the following 3-tensor defined in \cite{BDR},
\begin{eqnarray}\label{TensorT}
T_{ijk}&=&\frac{n-1}{n-2}(R_{ik}\nabla_{j}f-R_{jk}\nabla_{i}f)-\frac{R}{n-2}(g_{ik}\nabla_{j}f-g_{jk}\nabla_{i}f)\nonumber\\
&&+\frac{1}{n-2}(g_{ik}R_{js}\nabla_{s}f-g_{jk}R_{is}\nabla_{s}f).
\end{eqnarray}
Note that $T_{ijk}$ has the same symmetry properties as the Cotton tensor:
$$T_{ijk}=-T_{jik}\;\;\;and\;\;\;g^{ij}T_{ijk}=g^{ik}T_{ijk}=0.$$
Furthermore, with a straightforward computation we verify that
\begin{equation}\label{CTW}
fC_{ijk}=T_{ijk}+W_{ijkl}\nabla_{l}f.
\end{equation}

To conclude this section, the following lemma will be useful for our purpose.
\begin{lemma}\label{bachCotton}
Let $(M^{n},g,f)$ be a Miao-Tam critical metric. Then the following identity holds:
$$(n-2)fB_{ij}=\nabla_{k}T_{kij}+\frac{n-3}{n-2}C_{jki}\nabla_{k}f+C_{ikj}\nabla_{k}f.$$
\end{lemma}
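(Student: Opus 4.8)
The plan is to differentiate the algebraic identity (\ref{CTW}) and reorganize the terms using the definitions of the Bach and Cotton tensors together with the fundamental equation. Concretely, I would first relabel the indices in (\ref{CTW}) so that the $3$-tensor appears in the slot we want to take the divergence of, namely $fC_{kij}=T_{kij}+W_{kijl}\nabla_{l}f$, and then apply $\nabla_{k}$ to both sides. The left-hand side expands by the product rule as $(\nabla_{k}f)C_{kij}+f\nabla_{k}C_{kij}$, while the right-hand side becomes $\nabla_{k}T_{kij}+(\nabla_{k}W_{kijl})\nabla_{l}f+W_{kijl}\nabla_{k}\nabla_{l}f$. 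The goal is then to recognize the Bach tensor inside $f\nabla_{k}C_{kij}$ and to show that all remaining terms either cancel or collapse onto the two Cotton terms in the statement.

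The key preliminary computation is to express the divergence of the Cotton tensor through the Bach tensor. Using (\ref{cottonwyel}) in the form $C_{kij}=-\tfrac{n-2}{n-3}\nabla_{l}W_{kijl}$, the antisymmetry $W_{kijl}=-W_{ikjl}$ in the first two slots, and the definition (\ref{bach}) of $B_{ij}$, I expect to obtain $\nabla_{k}C_{kij}=(n-2)B_{ij}-R_{kl}W_{ikjl}$. Multiplying by $f$ isolates exactly the term $(n-2)fB_{ij}$ we are after, at the cost of producing an extra contraction $-fR_{kl}W_{ikjl}$ that will have to be dealt with later. A parallel computation handles the first Weyl term on the right: applying (\ref{cottonwyel}) after using the pair-swap and antisymmetry properties of $W$ should give $\nabla_{k}W_{kijl}=\tfrac{n-3}{n-2}C_{jli}$, so that $(\nabla_{k}W_{kijl})\nabla_{l}f=\tfrac{n-3}{n-2}C_{jki}\nabla_{k}f$, which is precisely the second term in the claimed identity.

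It then remains to treat $W_{kijl}\nabla_{k}\nabla_{l}f$ and to reconcile the leftover pieces. Here is where the Miao--Tam structure enters: substituting the Hessian from the fundamental equation (\ref{eqfund1}), i.e. $\nabla_{k}\nabla_{l}f=(1+\Delta f)g_{kl}+fR_{kl}$, the metric term is annihilated because the Weyl tensor is totally trace-free ($g^{kl}W_{kijl}=0$), while the Ricci term yields $fR_{kl}W_{kijl}=-fR_{kl}W_{ikjl}$ by the antisymmetry of $W$. This contribution cancels exactly the spurious $-fR_{kl}W_{ikjl}$ generated in the previous step. Finally, collecting what is left reduces the remaining term $-(\nabla_{k}f)C_{kij}$ to $C_{ikj}\nabla_{k}f$ via the skew-symmetry $C_{kij}=-C_{ikj}$ recorded in (\ref{cottonprop}), producing the third term and completing the identity.

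I expect the main obstacle to be purely bookkeeping rather than conceptual: the computation hinges on correctly tracking the index positions and sign conventions when moving the divergence between different slots of the Weyl tensor, since (\ref{cottonwyel}) is stated for the divergence on the last index and the Bach tensor (\ref{bach}) is written with the contraction on $W_{ikjl}$. Each use of the antisymmetries in the first and last pairs, of the pair-swap symmetry $W_{abcd}=W_{cdab}$, and of the trace-free conditions must be applied with care so that the two cancellations (the $g_{kl}$ trace and the matching $R_{kl}W_{ikjl}$ terms) and the final Cotton sign conversion come out consistently.
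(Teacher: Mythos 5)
Your proof is correct and is essentially the same computation as the paper's: both arguments rest on rewriting the Bach tensor as $(n-2)B_{ij}=\nabla_{k}C_{kij}+W_{ikjl}R_{kl}$, differentiating the identity $fC_{kij}=T_{kij}+W_{kijl}\nabla_{l}f$, trading $fR_{kl}$ for the Hessian via the fundamental equation (\ref{eqfund1}) (with the trace-free property of $W$ killing the metric term), converting $\nabla_{k}W_{kijl}$ into the Cotton tensor via (\ref{cottonwyel}), and finishing with the skew-symmetry $C_{kij}=-C_{ikj}$. The only difference is the order of bookkeeping --- you differentiate (\ref{CTW}) first and recognize $(n-2)fB_{ij}$ inside $f\nabla_{k}C_{kij}$, whereas the paper starts from the Bach tensor and substitutes (\ref{CTW}) --- so the two proofs coincide term by term.
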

\begin{proof}
Note that, by using (\ref{cottonwyel}), $B_{ij}$ can be rewritten as
\begin{equation}\label{bacha}
(n-2)B_{ij}=\nabla_{k}C_{kij}+W_{ikjl}R_{kl}.
\end{equation}
Now, from (\ref{eqfund1}) we obtain
\begin{eqnarray*}
(n-2)fB_{ij}&=&f\nabla_{k}C_{kij}+W_{ikjl}\nabla_{k}\nabla_{l}f\\
&=&\nabla_{k}(fC_{kij})-C_{kij}\nabla_{k}f+W_{ikjl}\nabla_{k}\nabla_{l}f.
\end{eqnarray*}
Therefore, we use (\ref{CTW}) combined with (\ref{cottonwyel}) to infer
\begin{eqnarray*}
(n-2)fB_{ij}&=&\nabla_{k}T_{kij}+\nabla_{k}(W_{kijl}\nabla_{l}f)-C_{kij}\nabla_{k}f+W_{ikjl}\nabla_{k}\nabla_{l}f\\
&=&\nabla_{k}T_{kij}+\frac{n-3}{n-2}C_{jki}\nabla_{k}f-C_{kij}\nabla_{k}f,
\end{eqnarray*}
as desired.
\end{proof}

\section{Bach-flat Critical Metrics}

As mentioned in the introduction, this article has been motivated by the works of Miao and Tam \cite{miaotamTAMS} as well as  Barros, Di\'ogenes and Ribeiro \cite{BDR}. However, we must emphasize that the works of Kobayashi \cite{kobayashi} and Kobayashi and Obata \cite{obata} on classification of locally conformally flat static metrics and more recently,  Qing and Yuan \cite{QY} for Bach-flat static metrics, had great influence in the accomplishment of this article. This occurs because some expressions obtained are similar to those found in the case of Miao-Tam critical metrics. For example, if we consider a Miao-Tam critical metric as a warped product then we must have that the warped function satisfies the same ODE obtained in the static case (compare Lemma 1.1 in \cite{kobayashi} with proposition 3.1 in \cite{miaotamTAMS}).

Throughout this section we will consider $(M^{n},g,f)$, ($n\geq3$), a $n$-dimensional Miao-Tam critical metric with smooth boundary $\partial M$ and constant scalar curvature $R$ such that the first Dirichlet eigenvalue of $(n-1)\Delta_{g}+R$ on $M$ is positive. Thus, by \cite{miaotam} the potential function $f$ satisfies $f>0$ in the interior of $M$ and, if $\nu$ denotes the outward unit normal to $\partial M,$ then $\langle\nabla f,\nu\rangle<0$ on each connected component of $\partial M.$

Let us recall some observations for a Bach-flat Miao-Tam critical metric. In fact, in \cite{BDR} the authors proved that Bach-flat condition implies that the auxiliary tensor $T_{ijk}$ and the Cotton tensor $C_{ijk}$  vanish completely. So, it is immediate to check that, in the Bach-flat case, the Weyl tensor satisfies
\begin{eqnarray}\label{Weylf}
W_{ijkl}\nabla_{l}f=0,
\end{eqnarray}
for all $0\leq i,j,k\leq n.$

Proceeding we recall that, at a regular point of the potential function $f,$ the vector field $\nu=-\frac{\nabla f}{|\nabla f|}$ is normal to level set $\Sigma_{c}=\{p\in M:f(p)=c\}.$ So, considering $\{e_{1}=\nu,e_{2},\dots,e_{n}\}$ as an orthonormal frame with $\{e_{a}\}_{a\geq2}$ tangent to $\Sigma$, the second fundamental form of $\Sigma_{c}$ is given by

\begin{equation}\label{SFF}
\mathbb{II}_{ab}=\langle \nabla_{e_{a}}e_{1},e_{b}\rangle=-\frac{1}{|\nabla f|}\nabla_{a}\nabla_{b}f,
\end{equation}
and its mean curvature $H$ is given as follows,

\begin{equation}\label{meancurvature}
H=\frac{1}{|\nabla f|}(\nabla_{1}\nabla_{1}f-\Delta f)=\frac{1}{|\nabla f|}(fR_{11}+1).
\end{equation}
Thus, its not difficult to check that
\begin{eqnarray*}
|\nabla f|^{2}\sum_{a,b=2}^{n}|\mathbb{II}_{ab}-\frac{H}{n-1}g_{ab}|^{2}&=&f^{2}|Ric|^{2}-\frac{nR_{11}^{2}f^{2}+f^{2}R^{2}-2f^{2}RR_{11}}{n-1}\nonumber\\
&&-2f^{2}\sum_{a=2}^{n}R_{1a}^{2}
\end{eqnarray*}
and, defining the function $\rho=|\nabla f|^{2}+\frac{2}{n-1}f+\frac{R}{n-1}f^{2},$ after some computation, we rewrite the above expression as
\begin{eqnarray}\label{normsegforma}
|\nabla f|^{4}\sum_{a,b=2}^{n}|\mathbb{II}_{ab}-\frac{H}{n-1}g_{ab}|^{2}&=&f^{2}|\nabla f|^{2}|Ric|^{2}-\frac{1}{2}|\nabla^{\Sigma}\rho|^{2}-\frac{R^{2}|\nabla f|^{2}f^{2}}{n-1}\nonumber\\
&&-\frac{n}{4(n-1)|\nabla f|^{2}}\langle\nabla\rho,\nabla f\rangle^{2}+\frac{Rf}{n-1}\langle\nabla\rho,\nabla f\rangle.
\end{eqnarray}

Hence, if we compare Eq. (\ref{normsegforma}) with the norm of the tensor $T_{ijk},$ we can provide a key property for our purpose. Such relation was obtained by Barros, Di\'ogenes and Ribeiro jr (see \cite{BDR} for more details).

\begin{lemma}\label{normaTaux}
Let $(M^{n},g,f)$ be a Miao-Tam critical metric. Let $\Sigma_{c}=\{p\in M, f(p)=c\}$ be a level set of $f.$ If $g_{ab}$ denotes the induced metric on $\Sigma_{c},$ then, at any point where $\nabla f\neq0,$ we have
\begin{equation}
|fT|^{2}=\frac{2(n-1)^{2}}{(n-2)^{2}}|\nabla f|^{4}\sum_{a,b=2}^{n}|\mathbb{II}_{ab}-\frac{H}{n-1}g_{ab}|^{2}+\frac{n-1}{2(n-2)}|\nabla^{\Sigma}\rho|^{2},
\end{equation}
where $\mathbb{II}_{ab}$ and $H$ are the second fundamental form and the mean curvature of $\Sigma_{c},$ respectively.
\end{lemma}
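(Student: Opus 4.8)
The plan is to compute $|fT|^2$ directly from the definition (\ref{TensorT}) in the adapted orthonormal frame and then match the resulting curvature expression against the formula already established for the traceless second fundamental form, together with a direct computation of $|\nabla^{\Sigma}\rho|^2$. First I would fix the frame $\{e_1=\nu,e_2,\dots,e_n\}$ of the statement, so that $\nabla f=-|\nabla f|\,e_1$; equivalently $\nabla_1 f=-|\nabla f|$ and $\nabla_a f=0$ for $a\geq 2$, while $g_{ij}=\delta_{ij}$. Substituting these into (\ref{TensorT}) and using $R_{js}\nabla_s f=-|\nabla f|R_{j1}$, every term collapses and one obtains
$$T_{ijk}=-\frac{|\nabla f|}{n-2}\Big[(n-1)(R_{ik}\delta_{j1}-R_{jk}\delta_{i1})-R(g_{ik}\delta_{j1}-g_{jk}\delta_{i1})+(g_{ik}R_{j1}-g_{jk}R_{i1})\Big],$$
which I abbreviate as $T_{ijk}=-\tfrac{|\nabla f|}{n-2}A_{ijk}$.

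Next I would compute $|A|^2=A_{ijk}A_{ijk}$. Writing $A$ as a combination $\alpha P^{(1)}+\beta P^{(2)}+\gamma P^{(3)}$ of the three bracketed tensors with $\alpha=n-1,\ \beta=-R,\ \gamma=1$, the norm expands into six contractions. Each is an elementary trace; setting $\sum_k R_{1k}^2=R_{11}^2+\sum_{a\geq 2}R_{1a}^2$ one gets, for instance, $|P^{(1)}|^2=2|Ric|^2-2\sum_k R_{1k}^2$, $|P^{(2)}|^2=2(n-1)$, $|P^{(3)}|^2=2(n-1)\sum_k R_{1k}^2$, together with the three cross terms $\langle P^{(1)},P^{(2)}\rangle=2(R-R_{11})$, $\langle P^{(1)},P^{(3)}\rangle=2RR_{11}-2\sum_k R_{1k}^2$, and $\langle P^{(2)},P^{(3)}\rangle=2(n-1)R_{11}$. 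Assembling them yields
$$|fT|^2=\frac{2(n-1)f^2|\nabla f|^2}{(n-2)^2}\Big[(n-1)|Ric|^2-nR_{11}^2-R^2+2RR_{11}-n\textstyle\sum_{a\geq2}R_{1a}^2\Big].$$

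Finally, I would compare this with the identity already derived in the text just before (\ref{normsegforma}), which after multiplying by $(n-1)$ reads
$$(n-1)|\nabla f|^2\sum_{a,b=2}^n\Big|\mathbb{II}_{ab}-\tfrac{H}{n-1}g_{ab}\Big|^2=f^2\Big[(n-1)|Ric|^2-nR_{11}^2-R^2+2RR_{11}-2(n-1)\textstyle\sum_{a\geq2}R_{1a}^2\Big].$$
The bracket in $|fT|^2$ differs from this one only by the term $(n-2)\sum_{a\geq2}R_{1a}^2$, so $|fT|^2$ splits as $\tfrac{2(n-1)^2}{(n-2)^2}|\nabla f|^4\sum_{a,b}|\mathbb{II}_{ab}-\tfrac{H}{n-1}g_{ab}|^2$ plus the leftover $\tfrac{2(n-1)}{n-2}f^2|\nabla f|^2\sum_{a\geq2}R_{1a}^2$. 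To identify the leftover I would evaluate $\nabla^{\Sigma}\rho$: since $\nabla_a f=0$ for $a\geq2$, the definition of $\rho$ gives $\nabla_a\rho=\nabla_a|\nabla f|^2=-2|\nabla f|\nabla_a\nabla_1 f$, and the Hessian form of (\ref{eqfund1}), namely $Hess f=(\Delta f)g+fRic+g$, reduces this to $\nabla_a\rho=-2f|\nabla f|R_{1a}$. Hence $|\nabla^{\Sigma}\rho|^2=4f^2|\nabla f|^2\sum_{a\geq2}R_{1a}^2$, so the leftover equals exactly $\tfrac{n-1}{2(n-2)}|\nabla^{\Sigma}\rho|^2$, which closes the identity.

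The only real obstacle is the bookkeeping of the second step: keeping the six contractions straight and tracking the coefficients of $|Ric|^2$, $R^2$, $RR_{11}$, $R_{11}^2$, and $\sum_{a\geq2}R_{1a}^2$ so that the final bracket agrees, up to the single $(n-2)\sum R_{1a}^2$ correction, with the pre-computed second-fundamental-form expression. There is no conceptual difficulty; everything follows from the decomposition of $\nabla f$ in the adapted frame and the Hessian equation.
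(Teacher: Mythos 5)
Your proof is correct and takes essentially the same route as the paper, which states the lemma by comparing the norm of $T$ against the adapted-frame identity for the traceless second fundamental form (deferring the detailed computation to \cite{BDR}); your argument is exactly that comparison, carried out in full. The six contractions, the resulting bracket $(n-1)|Ric|^{2}-nR_{11}^{2}-R^{2}+2RR_{11}-n\sum_{a\geq2}R_{1a}^{2}$, and the identification of the leftover term with $\frac{n-1}{2(n-2)}|\nabla^{\Sigma}\rho|^{2}$ via $\nabla_{a}\rho=-2f|\nabla f|R_{1a}$ all check out.
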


Furthermore, still in \cite{BDR}, the authors showed nice properties about the geometry of a Miao-Tam critical metric $(M^{n},g,f)$ and level sets of the potential function $f,$ namely, they obtained the following lemma.
\begin{lemma}\label{levelsetgeral}
	Let $(M^n, g, \nabla f)$  be a Miao–Tam critical metric with  $T_{ijk}=0$. Let $c$ be a regular value of $f$ and $\Sigma_{c}=\{p\in M, f(p)=c\}$ be a level set of $f$. We consider $e_{1}=-\frac{\nabla f}{|\nabla f|}$ and choose an orthonormal frame $\{e_{2}, ..., e_{n}\}$ tangent to $\Sigma_{c}$. Under these conditions the following assertions hold
	\begin{enumerate}
        \item The second fundamental form $\mathbb{II}_{ab}$ of $\Sigma_{c}$ is $\mathbb{II}_{ab}=\dfrac{H}{n-1}g_{ab}$;
    	\item $|\nabla f|$ is constant on $\Sigma_{c}$;
        \item $R_{1a}=0,$ for all $a\geq2$ and $e_{1}$ is an eigenvector of $Ric$;
        \item The mean curvature of $\Sigma_{c}$ is constant;
        \item On $\Sigma_{c}$, the Ricci tensor either has a unique eigenvalue or two distinct eigenvalues with multiplicity $1$ and $n-1$. Moreover, the eigenvalue with multiplicity is in the direction of $\nabla f;$
        \item $R_{1abc}=0$ for $a,b,c\in \{2,\ldots,n\}$.
	\end{enumerate}
\end{lemma}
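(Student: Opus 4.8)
The plan is to read off each of the six assertions from the two tools already in place: the pointwise identity of Lemma~\ref{normaTaux} and the explicit component structure of the auxiliary tensor $T_{ijk}$ given in (\ref{TensorT}). Throughout I work at a point of a regular level set, where $e_{1}=-\nabla f/|\nabla f|$ is well defined, $\nabla_{1}f=-|\nabla f|\neq 0$, and $\nabla_{a}f=0$ for every $a\geq 2$. The first two items come essentially for free: since $T_{ijk}=0$ we have $|fT|^{2}=0$, and because the right-hand side of Lemma~\ref{normaTaux} is a sum of two nonnegative terms, each must vanish separately. The vanishing of the first term yields $\mathbb{II}_{ab}=\frac{H}{n-1}g_{ab}$, which is (1), while the vanishing of $|\nabla^{\Sigma}\rho|^{2}$ says that $\rho$ is constant along $\Sigma_{c}$; as $f\equiv c$ and $R$ are constant there, the definition $\rho=|\nabla f|^{2}+\frac{2}{n-1}f+\frac{R}{n-1}f^{2}$ forces $|\nabla f|^{2}$ to be constant on $\Sigma_{c}$, giving (2).

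Next I would obtain (3) and (5) by evaluating $T$ on the adapted frame. Substituting $\nabla_{a}f=0$ and $g_{1a}=0$ ($a\geq 2$) into (\ref{TensorT}) and using $R_{1s}\nabla_{s}f=R_{11}\nabla_{1}f$, a short computation gives $T_{a11}=R_{1a}\nabla_{1}f$; since $T=0$ and $\nabla_{1}f\neq 0$ this yields $R_{1a}=0$ for all $a\geq 2$, so $Ric(e_{1})=R_{11}e_{1}$ and $e_{1}\parallel\nabla f$ is an eigenvector, which is (3). (Equivalently, (3) follows by differentiating the now-constant $|\nabla f|^{2}$ tangentially and using $\nabla_{a}\nabla_{1}f=fR_{1a}$ read off from (\ref{eqfund1}).) Taking instead purely tangential indices gives $T_{a1b}=\frac{\nabla_{1}f}{n-2}\big[(n-1)R_{ab}-(R-R_{11})g_{ab}\big]$, so $T=0$ forces $R_{ab}=\frac{R-R_{11}}{n-1}g_{ab}$ on $\Sigma_{c}$. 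Combined with $R_{1a}=0$, the Ricci operator then has the single tangential eigenvalue $\frac{R-R_{11}}{n-1}$ of multiplicity $n-1$ and the eigenvalue $R_{11}$ of multiplicity one in the $\nabla f$ direction; these either coincide or are distinct, which is precisely (5).

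Finally I would deduce (4) and (6) together from the Codazzi equation of the hypersurface $\Sigma_{c}$. With the umbilicity (1) in hand, $\nabla^{\Sigma}_{a}\mathbb{II}_{bc}=\frac{1}{n-1}(\nabla^{\Sigma}_{a}H)g_{bc}$, so Codazzi reads $R_{1cab}=\frac{1}{n-1}\big(g_{bc}\nabla^{\Sigma}_{a}H-g_{ac}\nabla^{\Sigma}_{b}H\big)$. Contracting $a$ with $c$ and using the antisymmetries of $R_{ijkl}$ to recognize the resulting contraction as $-R_{1b}$, I obtain $\frac{n-2}{n-1}\nabla^{\Sigma}_{b}H=0$ after invoking $R_{1b}=0$ from (3); hence $H$ is constant on $\Sigma_{c}$, which is (4). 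Feeding $\nabla^{\Sigma}H=0$ back into the same Codazzi identity gives $R_{1cab}=0$ for all tangential indices, i.e. (6).

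I expect (4)/(6) to be the delicate step: one must set up the Codazzi identity with the correct sign and contraction and carefully match the contracted curvature term to the Ricci component $R_{1b}$ annihilated in (3), and the frame bookkeeping—keeping $\nabla_{a}f=0$ and the symmetries of $R_{ijkl}$ straight—is exactly where sign errors arise. By contrast, the component identities for (3) and (5) are routine once the adapted frame is fixed, and (1)–(2) are immediate from Lemma~\ref{normaTaux}.
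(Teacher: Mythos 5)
Your proof is correct, and it follows essentially the route of the paper's source for this lemma: the paper itself offers no proof (the statement is quoted from \cite{BDR}), and your argument --- (1)--(2) from the vanishing of both nonnegative terms in Lemma~\ref{normaTaux}, (3) and (5) from the frame components $T_{a11}=R_{1a}\nabla_{1}f$ and $T_{a1b}=\frac{\nabla_{1}f}{n-2}\big[(n-1)R_{ab}-(R-R_{11})g_{ab}\big]$, and (4), (6) from the contracted Codazzi equation combined with umbilicity and $R_{1a}=0$ --- is exactly the standard argument used there (following Cao--Chen). Your component computations and the contraction identity $\frac{n-2}{n-1}\nabla^{\Sigma}_{b}H=\pm R_{1b}=0$ all check out, so there is nothing to fix.
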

In the sequel, with the above notation, we are able to prove that the level sets are Einstein manifolds since we assume the Bach-flat condition. More precisely, we have the following result.

\begin{lemma}\label{lemkey}
Let $(M,g,f)$ be a Bach-flat Miao-Tam critical metric. Then for any level set $\Sigma_{c}$ with $c\neq0$ we have
$$R^{\Sigma}_{ab}=\frac{1}{n-1}\Big[R-2R_{11}+\frac{(n-2)H^{2}}{n-1}\Big]g_{ab},$$
where $R_{11}$ and $H$ are constant on $\Sigma_{c}.$ In particular, the level sets $\Sigma_{c}$ ($c\neq0$) with induced metric are Einstein manifolds.
\end{lemma}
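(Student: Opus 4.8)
The goal is to compute the intrinsic Ricci tensor $R^{\Sigma}_{ab}$ of a level set $\Sigma_c$ and show it is proportional to the induced metric. The natural tool is the Gauss equation, which relates the intrinsic curvature $R^{\Sigma}_{abcd}$ to the ambient curvature $R_{abcd}$ and the second fundamental form via
\begin{equation*}
R^{\Sigma}_{abcd}=R_{abcd}+\mathbb{II}_{ac}\mathbb{II}_{bd}-\mathbb{II}_{ad}\mathbb{II}_{bc}.
\end{equation*}
I would contract this over $a,c$ (with indices running in $\{2,\dots,n\}$, tangent to $\Sigma_c$) to obtain $R^{\Sigma}_{bd}$ in terms of a partial trace of the ambient Riemann tensor and the umbilicity term. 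The key simplification available here is Lemma~\ref{levelsetgeral}: since the Bach-flat hypothesis forces $T_{ijk}=0$, part (1) gives $\mathbb{II}_{ab}=\tfrac{H}{n-1}g_{ab}$, so the second-fundamental-form contribution collapses to $\tfrac{(n-2)H^2}{(n-1)^2}g_{bd}$, which already explains the $\tfrac{(n-2)H^2}{n-1}$ term in the statement.

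The main work is to identify the partial trace $\sum_{a=2}^{n}R_{abad}$. I would write the full ambient Ricci tensor as $R_{bd}=R_{1b1d}+\sum_{a=2}^{n}R_{abad}$, so that $\sum_{a=2}^{n}R_{abad}=R_{bd}-R_{1b1d}$. Here I expect to invoke Lemma~\ref{levelsetgeral}(6), namely $R_{1abc}=0$ for tangent indices, together with parts (3) and (5), which tell us that $e_1$ is an eigenvector of $\mathrm{Ric}$ and that on $\Sigma_c$ the Ricci tensor restricted to the tangent directions is a multiple of $g_{ab}$. Concretely, summing the trace-free Weyl decomposition or using the eigenvalue structure, one finds that $R_{bd}$ restricted to tangent directions equals $\tfrac{1}{n-1}(R-R_{11})g_{bd}$, and the sectional-type term $R_{1b1d}$ equals $\tfrac{1}{n-1}R_{11}g_{bd}$ on the tangent space. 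Assembling these gives $\sum_{a=2}^{n}R_{abad}=\tfrac{1}{n-1}(R-2R_{11})g_{bd}$, which is exactly the first two terms of the claimed formula.

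Putting the Gauss-equation contraction together with these ingredients yields
\begin{equation*}
R^{\Sigma}_{ab}=\frac{1}{n-1}\Big[R-2R_{11}+\frac{(n-2)H^{2}}{n-1}\Big]g_{ab}.
\end{equation*}
Since the right-hand coefficient is a single scalar and $g_{ab}$ is the induced metric, this says precisely that each $\Sigma_c$ is Einstein. The constancy of $R_{11}$ and $H$ along $\Sigma_c$ is not an extra argument but a direct citation: Lemma~\ref{levelsetgeral}(3)--(4) gives that $e_1$ is an eigendirection and that $H$ is constant on the level set, and one checks the eigenvalue $R_{11}$ is likewise constant there (it is determined by $H$ and the constants $R$, $|\nabla f|$ through the critical-metric equations).

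The step I expect to be the main obstacle is the careful bookkeeping of the partial trace $R_{bd}-R_{1b1d}$ on the tangent space. One must be sure that the Ricci eigenvalue structure in Lemma~\ref{levelsetgeral}(5) is correctly split between the ``normal-radial'' piece $R_{1b1d}$ and the purely tangential piece, so that the coefficients of $R_{11}$ combine to give the factor $2R_{11}$ rather than $R_{11}$. This is where the hypotheses $R_{1a}=0$ and $R_{1abc}=0$ are genuinely used, and where a sign or index-range slip would spoil the final Einstein coefficient.
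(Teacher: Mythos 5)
Your overall route is the same as the paper's (Gauss equation plus the umbilicity $\mathbb{II}_{ab}=\frac{H}{n-1}g_{ab}$ from Lemma~\ref{levelsetgeral}, then identification of the tangential Ricci tensor), but there is a genuine gap at the decisive step: your claim that $R_{1b1d}=\frac{R_{11}}{n-1}g_{bd}$ for tangential $b,d$. This does \emph{not} follow from Lemma~\ref{levelsetgeral}. Items (3) and (5) are statements about the Ricci tensor, and Ricci data only controls the trace $\sum_{b\geq2}R_{1b1b}=R_{11}$, not the individual components $R_{1b1d}$; item (6) concerns components $R_{1abc}$ with exactly \emph{one} normal index, whereas $R_{1b1d}$ has two. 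Indeed, inserting $R_{1a}=0$ and the tangential identity $R_{bd}=\frac{R-R_{11}}{n-1}g_{bd}$ into the decomposition (\ref{weyl}) gives
\begin{equation*}
R_{1b1d}=W_{1b1d}+\frac{R_{11}}{n-1}\,g_{bd},
\end{equation*}
so your claim is exactly equivalent to the vanishing of the radial Weyl components $W_{1b1d}$. Nothing in the Ricci eigenvalue structure forces the sectional curvatures of planes containing $\nabla f$ to coincide, and you cannot appeal to a warped-product structure here either, since that (Proposition~\ref{p2}) is established \emph{after}, and by means of, this very lemma.

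The missing ingredient is precisely where Bach-flatness enters beyond giving $T_{ijk}=0$: for a Bach-flat Miao-Tam critical metric both $T_{ijk}$ and the Cotton tensor $C_{ijk}$ vanish, so identity (\ref{CTW}) yields $W_{ijkl}\nabla_{l}f=0$, i.e.\ equation (\ref{Weylf}), hence $W_{1b1d}=0$. With this cited, the display above closes the gap and your contraction of the Gauss equation gives the stated Einstein formula; this is exactly how the paper concludes (it traces (\ref{Riemannaux1}) and kills the Weyl term using (\ref{Weylf})). The remaining differences are immaterial: you obtain $R_{bd}=\frac{R-R_{11}}{n-1}g_{bd}$ by citing Lemma~\ref{levelsetgeral}(5), while the paper rederives it by comparing the two expressions (\ref{SFFaux}) and (\ref{SFFauxx}) for the second fundamental form; and your argument for the constancy of $H$ and $R_{11}$ on $\Sigma_c$ (items (2) and (4) together with $H=\frac{1+fR_{11}}{|\nabla f|}$ and $f=c\neq0$) is sound.
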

\begin{proof}
Firstly, we use the Gauss equation jointly with Lemma~\ref{levelsetgeral}, item $(1),$ to deduce
\begin{eqnarray}\label{RSigma}
R^{\Sigma}_{abcd}&=&R_{abcd}+\mathbb{II}_{ac}\mathbb{II}_{bd}-\mathbb{II}_{ad}\mathbb{II}_{bc}\nonumber\\
&=&R_{abcd}+\frac{H^{2}}{(n-1)^{2}}(g_{ac}g_{bd}-g_{ad}g_{bc}).
\end{eqnarray}
Now, substituting (\ref{weyl}) into (\ref{RSigma}), we get
\begin{eqnarray*}
R^{\Sigma}_{abcd}&=&W_{abcd}+\frac{1}{n-2}(R_{ac}g_{bd}+R_{bd}g_{ac}-R_{ad}g_{bc}-R_{bc}g_{ad})\\
&&-\frac{R}{(n-1)(n-2)}(g_{ac}g_{bd}-g_{ad}g_{bc})+\frac{H^{2}}{(n-1)^{2}}(g_{ac}g_{bd}-g_{ad}g_{bc}).
\end{eqnarray*}
Next, substituting (\ref{eqfund1}) into (\ref{SFF}) we obtain
\begin{equation}\label{SFFaux}
\mathbb{II}_{ab}=\frac{1}{|\nabla f|}\Big[-fR_{ab}+\frac{Rf+1}{n-1}g_{ab}\Big].
\end{equation}
On the other hand, by Lemma~\ref{levelsetgeral}, item $(1),$ jointly with the expression of the mean curvature obtained in (\ref{meancurvature}), we can deduce another expression for the second fundamental form, i.e.,

\begin{equation}\label{SFFauxx}
\mathbb{II}_{ab}=\frac{1+fR_{11}}{(n-1)|\nabla f|}g_{ab}.
\end{equation}

So, comparing (\ref{SFFaux}) with (\ref{SFFauxx}) and using that $f>0$ in the interior of $M$, we arrive at
$$R_{ab}=\frac{R-R_{11}}{n-1}g_{ab},$$
for all $2\leq a,b\leq n.$ Consequently, we have that

\begin{eqnarray}\label{Riemannaux1}
R^{\Sigma}_{abcd}&=&W_{abcd}+\frac{2(R-R_{11})}{(n-1)(n-2)}(g_{ac}g_{bd}-g_{ad}g_{bc})\nonumber\\
&&-\frac{R}{(n-1)(n-2)}(g_{ac}g_{bd}-g_{ad}g_{bc})+\frac{H^{2}}{(n-1)^{2}}(g_{ac}g_{bd}-g_{ad}g_{bc})\nonumber\\
&=&W_{abcd}+\frac{1}{(n-1)(n-2)}\Big[R-2R_{11}+\frac{(n-2)H^{2}}{n-1}\Big](g_{ac}g_{bd}-g_{ad}g_{bc}).
\end{eqnarray}
In particular, as we already know that $W_{ijkl}\nabla_{l}f=0$ (see equality (\ref{Weylf})), we immediately obtain
\begin{eqnarray}\label{ricsigma}
R^{\Sigma}_{ac}=\frac{1}{n-1}\Big[R-2R_{11}+\frac{(n-2)H^{2}}{n-1}\Big]g_{ac}
\end{eqnarray}
and
\begin{eqnarray}\label{Ssigma}
R^{\Sigma}=R-2R_{11}+\frac{(n-2)H^{2}}{n-1}.
\end{eqnarray}

To finalize, in order to see that $R_{11}$ and the mean curvature $H$ are constants on the level sets, it suffices to use Lemma~\ref{levelsetgeral} jointly with the fact that $M$ has vanishing Cotton tensor. So, we complete the proof of the lemma.
\end{proof}

Now, let us recall a fundamental proposition that determine sufficient conditions to a warped product metric becomes a Miao-Tam critical metric (For detailed calculations, please refer to \cite{miaotamTAMS}, Proposition 3.1).

\begin{proposition}\label{p1}
Let $I\times_{r}N=(I\times N,g=dt^{2}+r(t)^{2}g_{N})$ be a warped product metric. Then, for any constant $R$, the metric $g$ has constant scalar curvature $R$ and satisfies (\ref{eqfund1}) for a smooth function $f$ depending only on $t\in I$ if and only if the following holds:
\begin{enumerate}
  \item[(i)] $(N,g_{N})$ is an Einstein manifold with $Ric_{g_{N}}=(n-2)\kappa_{0}g_{N},$ the function $r$ satisfies
  $$r''+\frac{R}{n(n-1)}r=ar^{1-n}$$
  for some constant $a$, and the constant $\kappa_{0}$ satisfies
  $$(r')^{2}+\frac{R}{n(n-1)}r^{2}+\frac{2a}{n-2}r^{2-n}=\kappa_{0}.$$
  \item[(ii)] The function $f$ satisfies
  $$r'f'-r''f=-\frac{1}{n-1}r.$$
\end{enumerate}

\end{proposition}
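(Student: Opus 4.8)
The plan is to treat this as a direct computation on the warped product $g=dt^2+r(t)^2g_N$, exploiting that both $r$ and $f$ depend only on $t$. First I would record the standard O'Neill formulas for a warped product over a one-dimensional base. Writing $\partial_t$ for the unit base field and $X,Y$ for fields tangent to the fibers $N$, one has $Ric(\partial_t,\partial_t)=-(n-1)\frac{r''}{r}$, $Ric(\partial_t,X)=0$, and $Ric(X,Y)=Ric_{g_N}(X,Y)-\big[\frac{r''}{r}+(n-2)\frac{(r')^2}{r^2}\big]g(X,Y)$. Likewise, for $f=f(t)$ the Hessian is block-diagonal, with $Hess\,f(\partial_t,\partial_t)=f''$, $Hess\,f(\partial_t,X)=0$, and $Hess\,f(X,Y)=\frac{r'f'}{r}g(X,Y)$, so that $\Delta f=f''+(n-1)\frac{r'f'}{r}$.

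Next I would substitute these into the fundamental equation (\ref{eqfund1}) and split it into its three natural blocks. The mixed block $(\partial_t,X)$ is satisfied trivially, since every tensor involved is block-diagonal. The block $(\partial_t,\partial_t)$, after the $f''$ terms cancel, collapses to $(n-1)\frac{r''f-r'f'}{r}=1$, which is exactly condition (ii). In the fiber block $(X,Y)$, every term except $-f\,Ric_{g_N}(X,Y)$ is a function of $t$ times $g(X,Y)$; since $f>0$ in the interior and depends only on $t$, this forces $Ric_{g_N}$ to be a pointwise multiple of $g_N$, and as $Ric_{g_N}$ is independent of $t$ that multiple is a genuine constant. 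Hence $(N,g_N)$ is Einstein with $Ric_{g_N}=(n-2)\kappa_0 g_N$, which is the first assertion of (i).

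It then remains to produce the two ODEs. Imposing that the scalar curvature equal the constant $R$ and inserting the Einstein condition yields the single second-order relation
\begin{equation*}
2(n-1)\,r\,r''+(n-1)(n-2)(r')^{2}+R\,r^{2}-(n-1)(n-2)\kappa_{0}=0.
\end{equation*}
I would integrate this using the integrating factor $r^{\,n-3}r'$: the first two terms combine into $\frac{d}{dt}\big[(n-1)r^{\,n-2}(r')^2\big]$ and the remaining terms are exact as well, so one integration, after dividing by $(n-1)r^{\,n-2}$ and renaming the integration constant as $a$, gives precisely the relation $(r')^2+\frac{R}{n(n-1)}r^2+\frac{2a}{n-2}r^{2-n}=\kappa_0$ of (i). Differentiating this relation and dividing by $2r'$ (away from the critical points of $r$, and by continuity everywhere) returns the second-order ODE $r''+\frac{R}{n(n-1)}r=ar^{1-n}$ of (i).

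For the converse I would reverse the computation: assuming (i) and (ii), one checks directly that the warped-product scalar curvature equals $R$ (this is the integrated relation read backwards) and that each block of (\ref{eqfund1}) holds. The delicate point, and the main bookkeeping obstacle, is the fiber block. After inserting the Einstein condition and using the two ODEs of (i) to eliminate $(r')^2-\kappa_0$ and $r''$, together with the expression for $f''$ obtained by differentiating (ii), the fiber equation must collapse back to (ii) itself with no residual constraint; verifying this cancellation is the one genuinely computational step. I expect this check to go through uniformly for all $n\ge 3$ (the apparently dangerous factor $(n-3)$ arising in the reduction cancels harmlessly, the case $n=3$ simply reducing to the identity $\tfrac{2}{n-1}=1$), thereby confirming that (i) and (ii) are both necessary and sufficient.
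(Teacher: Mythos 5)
Your proposal is correct, but note that the paper itself contains no proof of this proposition: it is quoted from Miao--Tam, and the reader is explicitly referred to Proposition 3.1 of \cite{miaotamTAMS} for the calculations. Your blind computation is essentially that standard argument — O'Neill's formulas for a warped product over a one-dimensional base, the block decomposition of equation (\ref{eqfund1}) (the $\partial_t\partial_t$-block giving (ii), the fiber block forcing the Einstein condition), and a first integral of the scalar-curvature relation producing the two ODEs in (i) — and the fiber-block cancellation you flag as the one delicate step in the converse does close up for all $n\ge 3$ exactly as you predict: the $R$-coefficients combine as $1+(n-3)-(n-2)=0$, the $a$-coefficients as $(n-1)-(n-3)-2=0$, and the constants as $\tfrac{1}{n-1}+\tfrac{n-2}{n-1}=1$. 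Two small points deserve tightening. First, the hypothesis $f>0$ is not available in the proposition as stated (it is a property of Miao--Tam metrics under the eigenvalue assumption, not of the bare warped-product problem); but you only need $f\not\equiv 0$, which is automatic, since $f\equiv 0$ in (\ref{eqfund1}) would give $0=g$. Second, your derivation of the second-order ODE divides by $r'$; to make ``by continuity everywhere'' legitimate you should observe that $r'$ cannot vanish on an open interval (if it did, then $r''=0$ there and (ii) would force $0=-\tfrac{r}{n-1}$, impossible since $r>0$), so $\{r'\neq 0\}$ is dense; alternatively, substitute the first integral back into the scalar-curvature relation and solve for $r''$ algebraically, which avoids the division altogether.
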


The next result plays an important role in our main theorem of this section. In what follows, consider $\Sigma_{0}$ as a connected component of the boundary $f^{-1}(0)$ and $\widetilde{M}_{0}$ be the connected component of the open set $\{x\in M;|\nabla f|>0\}$ such that its closure contains $\Sigma_{0}$ and let $M_{0}=\widetilde{M}_{0}\cup\Sigma_{0}.$ With this consideration in mind, the same spliting result, obtained by Miao and Tam in \cite[Lemma 4.2]{miaotamTAMS}, holds for a critical metric under the assumption which the manifold is Bach-flat.

\begin{proposition}\label{p2}
Let $(M^{n},g,f)$ be a Bach-flat Miao-Tam critical metric.  Then, there exist a constant $\delta_{0}>0$ such that $(M_{0},g)$ is isometric to a warped product $([0,\delta_{0})\times\Sigma_{0},\; dt^{2}+r^{2}g_{\Sigma_{0}}),$ where $r>0$ is a smooth warped function on $[0,\delta_{0})$ and $g_{\Sigma_{0}}$ is the induced metric on $\Sigma_{0}$ from $g.$ Moreover, $f$ on $M_{0}$ depends only on $t\in[0,\delta)$ and $\Sigma_{0}$ is an Einstein manifold.
\end{proposition}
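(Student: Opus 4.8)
The plan is to realize the collar $M_{0}$ as the image of the normal geodesic flow off $\Sigma_{0}$ and to read the warped product structure directly off the geometric conclusions already extracted in Lemma~\ref{levelsetgeral}. Since $f=0$ on $\partial M$ and $\langle\nabla f,\nu\rangle<0$ there, the component $\Sigma_{0}$ is a \emph{compact, regular} level set of $f$ with $\nabla f\neq 0$ on it, and $\nabla f$ points into the interior. I would therefore work with the unit field $N=\nabla f/|\nabla f|$ on $\widetilde{M}_{0}$ and define $\Phi(t,x)$ to be the point reached after flowing arc-length $t$ along the integral curve of $N$ issuing from $x\in\Sigma_{0}$.

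The first key step is to verify that the integral curves of $N$ are geodesics. Writing $u=|\nabla f|$ and using $\mathrm{Hess}\,f(\nabla f)=\tfrac12\nabla|\nabla f|^{2}=u\nabla u$, one computes
\begin{equation*}
\nabla_{N}N=\frac{1}{u}\Big(\nabla u-\frac{\langle\nabla f,\nabla u\rangle}{u^{2}}\nabla f\Big).
\end{equation*}
By item $(2)$ of Lemma~\ref{levelsetgeral}, $|\nabla f|$ is constant on level sets, so $u$ is locally a function of $f$ and $\nabla u$ is proportional to $\nabla f$; hence the parenthesis vanishes and $\nabla_{N}N=0$. Consequently $\Phi$ is the Fermi-coordinate map associated to $\Sigma_{0}$, and the Gauss lemma gives $\Phi^{*}g=dt^{2}+g_{t}$, where $g_{t}$ is the induced metric on the slice $\Sigma_{t}=\Phi(\{t\}\times\Sigma_{0})$.

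Next I would integrate the evolution of the slice metric. Along the inward flow the shape operator of the slices with respect to $N=-e_{1}$ equals $-\mathbb{II}_{ab}$, so $\partial_{t}g_{ab}=-2\mathbb{II}_{ab}$; item $(1)$ of Lemma~\ref{levelsetgeral} supplies the umbilicity relation $\mathbb{II}_{ab}=\tfrac{H}{n-1}g_{ab}$, while item $(4)$ guarantees that $H$ is constant on each slice, i.e. $H=H(t)$. Thus $\partial_{t}g_{ab}=-\tfrac{2H(t)}{n-1}g_{ab}$, which integrates to $g_{t}=r(t)^{2}g_{\Sigma_{0}}$ with $r(t)=\exp\!\big(-\tfrac{1}{n-1}\int_{0}^{t}H(s)\,ds\big)$, so that $r$ is smooth, positive, and $r(0)=1$. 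This exhibits $(M_{0},g)$ as the warped product $([0,\delta_{0})\times\Sigma_{0},\,dt^{2}+r^{2}g_{\Sigma_{0}})$. Since the slices are precisely the level sets of $f$, the function $f$ is constant on each slice and hence depends only on $t$, and $\Sigma_{0}$ is Einstein by Lemma~\ref{lemkey}.

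The main obstacle, and the only genuinely analytic point, is producing a uniform $\delta_{0}>0$ for which $\Phi$ is a diffeomorphism onto a collar contained in $\widetilde{M}_{0}$. Here I would argue by compactness: $\Sigma_{0}$ is compact and $|\nabla f|$ is continuous and strictly positive on $\Sigma_{0}$, so by continuous dependence of the geodesic flow the integral curves of $N$ remain in the open set $\{|\nabla f|>0\}$ for a time bounded below uniformly in $x\in\Sigma_{0}$; shrinking this bound if necessary makes $\Phi$ an embedding. On the resulting collar $r$ never vanishes and the construction above is valid, which completes the argument.
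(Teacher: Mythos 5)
Your local computations are correct, and they follow essentially the same route as the proof the paper points to (the paper simply invokes Lemma 4.2 of \cite{miaotamTAMS}, whose argument is exactly this normal-flow construction, with Lemma~\ref{lemkey} replacing constant curvature of the slices by the Einstein condition): the integral curves of $N=\nabla f/|\nabla f|$ are geodesics because $|\nabla f|$ is locally a function of $f$ by item (2) of Lemma~\ref{levelsetgeral} (which is available here because Bach-flatness forces $T\equiv 0$ and $C\equiv 0$, as recalled at the start of Section 3), the generalized Gauss lemma gives $\Phi^{*}g=dt^{2}+g_{t}$, and umbilicity together with constancy of $H$ on each slice integrates to $g_{t}=r(t)^{2}g_{\Sigma_{0}}$. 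The genuine gap is at the very end. Proposition~\ref{p2} asserts that \emph{all of} $(M_{0},g)$ is isometric to $([0,\delta_{0})\times\Sigma_{0},\,dt^{2}+r^{2}g_{\Sigma_{0}})$, where $M_{0}=\widetilde{M}_{0}\cup\Sigma_{0}$ is the entire connected component of $\{|\nabla f|>0\}$ adjacent to $\Sigma_{0}$; your compactness step instead \emph{shrinks} $\delta_{0}$ until $\Phi$ is an embedding, so you only obtain the warped-product structure on an arbitrarily small collar of $\Sigma_{0}$, which is a strictly weaker statement. What is missing is the continuation (open-and-closed) argument: one must take $\delta_{0}$ \emph{maximal}, observe that $f\circ\Phi$, $|\nabla f|\circ\Phi$ and $H\circ\Phi$ depend only on $t$ (connectedness of the slices plus uniqueness for the ODE $\tfrac{d}{dt}(f\circ\Phi)=|\nabla f|\circ\Phi$), so that every flow line stays in $\widetilde{M}_{0}$ for the same maximal time; deduce injectivity of $\Phi$ on $[0,\delta_{0})\times\Sigma_{0}$ from strict monotonicity of $f\circ\Phi$ in $t$; and then show the image of $\Phi$ is both open and closed in $M_{0}$, hence equals $M_{0}$ by connectedness. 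This global version is not a cosmetic improvement: the subsequent classification (Lemma 4.4 and Theorems 4.1--4.2 of \cite{miaotamTAMS}, used to prove Theorem~\ref{thmbach}) analyzes the behaviour of $r$ and $|\nabla f|$ as $t\to\delta_{0}$ over the whole component, so a small collar is not enough.

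Two secondary points. First, you cite Lemma~\ref{lemkey} to conclude that $\Sigma_{0}$ itself is Einstein, but that lemma explicitly excludes the level $c=0$ (its proof cancels a factor of $f$, which vanishes on $\Sigma_{0}$). The conclusion still holds, but the correct deduction is: for $t>0$ the slice is a level set $\Sigma_{c}$ with $c\neq 0$, hence Einstein by Lemma~\ref{lemkey}; since $g_{t}=r(t)^{2}g_{\Sigma_{0}}$ and the Einstein condition is invariant under homothety, $g_{\Sigma_{0}}$ is Einstein. Second, your assertion that ``the slices are precisely the level sets of $f$'' is stated without justification; it is where the ODE-uniqueness argument above (or, equivalently, the absence of cross terms $dt\otimes g_{t}$ in $\Phi^{*}g$ together with $\nabla f\parallel\partial_{t}$) must be used, and it is also the mechanism that lets you apply items (1) and (4) of Lemma~\ref{levelsetgeral}, which are statements about level sets of $f$, to the slices $\Phi(\{t\}\times\Sigma_{0})$.
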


\begin{remark}
Let us highlight that the proposition~\ref{p1} and proposition~\ref{p2} jointly with Lemma 4.4 in \cite{miaotamTAMS} give a complete solution to the fundamental equation (\ref{eqfund1}) for a Bach-flat case except in the critical set of $f.$
\end{remark}

\begin{remark}
We point out that our warped product critical metrics, obtained under Bach-flat assumption, has level sets as an Einstein manifold instead of constant sectional curvature as in \cite{miaotamTAMS}.
\end{remark}

Therefore, after this local splitting result, we are ready to build the complete solution to a Miao-Tam critical metric under Bach-flat assumption, proceeding exactly as in \cite{miaotamTAMS}. Indeed, we use  Lemma 4.4 in \cite{miaotamTAMS} in order to get that the boundary has at most two connected components and, then, it suffices to use the Theorems 4.1 and 4.2 in the referred paper to conclude this strong result. More precisely, we have established the following result.

\begin{theorem}\label{thmbach}
Let $(M^n,\,g,\,f)$ be a Bach-flat Miao-Tam critical metric. Suppose that the first Dirichlet eigenvalue of $(n-1)\Delta_{g}+R$ is nonnegative, where $R$ is the scalar curvature of $g$.
\begin{enumerate}
  \item[(i)] If $\Sigma$ is disconnected, then $\Sigma$ has exactly two connected components and $(M,g)$ is isometric to $(I\times N,ds^{2}+r^{2}h)$ where $I$ is a finite interval in $\mathbb{R}$ containing the origin $0$, $(N,h)$ is a compact (without boundary) Einstein manifold with $Ric=(n-2)\kappa_{0}h,$ for some constant $\kappa_{0},$ $r$ is a positive function on $I$ satisfying $r'(0)=0$ and
$$r''+\frac{R}{n(n-1)}r=ar^{1-n}$$
for some constant $a>0$, and the constant $\kappa_{0}$ satisfies
$$(r')^{2}+\frac{R}{n(n-1)}r^{2}+\frac{2a}{n-2}r^{2-n}=\kappa_{0}.$$
  \item[(ii)] If $\Sigma$ is connected, then $(M,g)$ is either isometric to a geodesic ball in a simply connected space form $\mathbb{R}^{n}$, $\mathbb{H}^{n}$, $\mathbb{S}^{n}$, or $(M,g)$ is covered by one of the above mentioned warped product in (i) with a covering group $\mathbb{Z}_{2}.$
\end{enumerate}
\end{theorem}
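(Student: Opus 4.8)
The plan is to globalize the local warped-product splitting of Proposition~\ref{p2} and then analyze the metric across the critical set of $f$, following the scheme of Miao and Tam~\cite{miaotamTAMS}. Recall that the Bach-flat hypothesis gives $T_{ijk}=0$ and $C_{ijk}=0$, hence $W_{ijkl}\nabla_l f=0$ by (\ref{Weylf}), and that by Lemma~\ref{lemkey} every regular level set $\Sigma_c$ ($c\neq 0$) is Einstein. Fixing a connected component $\Sigma_0$ of $\partial M=f^{-1}(0)$, Proposition~\ref{p2} isometrically identifies a collar $M_0$ with a warped product $([0,\delta_0)\times\Sigma_0,\;dt^2+r^2 g_{\Sigma_0})$ in which $f=f(t)$ and $\Sigma_0$ is Einstein.

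First I would extend this collar along the flow of $\nu=\nabla f/|\nabla f|$ throughout the component $\widetilde{M}_0$ of $\{|\nabla f|>0\}$ whose closure meets $\Sigma_0$. Since $f$ has no critical point in $\widetilde{M}_0$, all level sets there are diffeomorphic to $\Sigma_0$, are Einstein with a common constant $\kappa_0$ by Lemma~\ref{lemkey}, and carry $|\nabla f|$ constant by Lemma~\ref{levelsetgeral}(2); combining Lemma~\ref{levelsetgeral}(1),(3),(6) with $W_{ijkl}\nabla_l f=0$ one recognizes $\widetilde{M}_0$ as a warped product $dt^2+r(t)^2 h$ over an interval, with $(N,h)=(\Sigma_0,g_{\Sigma_0})$ and $Ric_h=(n-2)\kappa_0 h$. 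Proposition~\ref{p1} then pins down $r$ as a solution of
$$r''+\frac{R}{n(n-1)}r=ar^{1-n},$$
subject to the first integral
$$(r')^2+\frac{R}{n(n-1)}r^2+\frac{2a}{n-2}r^{2-n}=\kappa_0,$$
with $a>0$ since $r,f>0$ in the interior, while Proposition~\ref{p1}(ii) supplies $r'f'-r''f=-\tfrac{1}{n-1}r$.

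Next I would study how the warped product terminates at $\partial\widetilde{M}_0\setminus\Sigma_0$, a subset of the critical set of $f$. The key mechanism, which underlies Lemma 4.4 of~\cite{miaotamTAMS}, is that at any interior point where $r'=0$ the relation $r'f'-r''f=-\tfrac{1}{n-1}r$ forces $r''=\tfrac{r}{(n-1)f}>0$; thus every interior critical point of $r$ is a strict minimum, so $r$ has at most one such point and $\partial M$ can have at most two components. Two scenarios remain. If $r\to 0$ at an interior point, smoothness of the ambient metric there forces the fiber to collapse as a round sphere — so that $(N,h)$, a priori only Einstein, must in fact have constant curvature — and $M_0$ closes up to a geodesic ball in $\mathbb{R}^n$, $\mathbb{H}^n$ or $\mathbb{S}^n$ according to the first integral and the sign of $R$. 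If instead $r$ attains a positive minimum with $r'=0$, the corresponding slice is a genuine totally geodesic hypersurface, and recentering the interval there gives $r'(0)=0$.

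Finally I would assemble the global picture via Theorems 4.1 and 4.2 of~\cite{miaotamTAMS}. When $\partial M$ is disconnected it has exactly two components, the two collars meet along the interior minimal slice, and centering at $r'(0)=0$ yields the symmetric two-ended warped product $(I\times N,\,ds^2+r^2h)$ of part (i) with the stated ODE and constraint. When $\partial M$ is connected, either the regular region caps off at an isolated pole, giving a geodesic ball in a space form, or it terminates at an interior slice across which $M$ doubles to a two-ended model from (i) by a free involution, exhibiting $(M,g)$ as a $\mathbb{Z}_2$-cover of such a warped product. The step I expect to be the main obstacle is the critical-set analysis: showing that a collapsing fiber must be a round sphere, so that the genuine space-form conclusion — and not merely a metric cone over an Einstein manifold — is reached, and, more generally, verifying that the matching and smoothness arguments of Miao and Tam, originally carried out under local conformal flatness where the fibers have constant curvature, survive once the fibers are known only to be Einstein.
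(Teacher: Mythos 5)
Your proposal is correct and takes essentially the same route as the paper: the paper's proof of Theorem~\ref{thmbach} consists exactly of the local warped-product splitting with Einstein fibers from Proposition~\ref{p2}, followed by Lemma 4.4 of \cite{miaotamTAMS} (the ODE argument you describe, forcing interior critical points of $r$ to be strict minima and hence at most two boundary components) and Theorems 4.1--4.2 of \cite{miaotamTAMS} for the global case analysis. The subtlety you flag --- that collapsing fibers must be round spheres while non-collapsing fibers remain merely Einstein --- is precisely what the paper acknowledges in its remark following Proposition~\ref{p2} and what the statement of the theorem (Einstein $(N,h)$ in case (i), space forms only in the geodesic-ball case of (ii)) is designed to accommodate.
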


As an immediate consequence of Theorem \ref{thmbach} we deduce the following rigidity result.

\begin{corollary}\label{corB}
Let $(M^n,\,g,\,f)$ be a simply connected, Bach-flat Miao-Tam critical metric with boundary isometric to a standard sphere $\mathbb{S}^{n-1}.$  Then $(M^n,\,g)$ is isometric to a geodesic ball in a simply connected space form $\Bbb{R}^{n},$ $\Bbb{H}^{n},$ or $\Bbb{S}^n.$
\end{corollary}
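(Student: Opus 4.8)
The plan is to derive Corollary~\ref{corB} directly from the dichotomy in Theorem~\ref{thmbach}, using the two hypotheses—connectedness of the boundary and simple connectivity of $M$—to eliminate every branch of the classification except the geodesic ball. First I would observe that, since $\partial M$ is isometric to the standard sphere $\mathbb{S}^{n-1}$, the boundary $\Sigma$ is connected. Consequently item (i) of Theorem~\ref{thmbach}, which forces $\Sigma$ to have exactly two connected components, cannot occur, and we are necessarily in the situation described by item (ii).

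Item (ii) leaves two possibilities: either $(M,g)$ is isometric to a geodesic ball in one of the simply connected space forms $\mathbb{R}^n$, $\mathbb{H}^n$, $\mathbb{S}^n$—which is exactly the desired conclusion—or $(M,g)$ is covered, with covering group $\mathbb{Z}_2$, by one of the warped products $(I\times N,\,ds^2+r^2h)$ of item (i). The second possibility would produce a connected two-fold covering $\pi\colon I\times N\to M$, since the total space $I\times N$ is connected. The key step is to rule this out: a connected double cover of $M$ corresponds to an index-two subgroup of $\pi_1(M)$, equivalently to a nontrivial homomorphism $\pi_1(M)\to\mathbb{Z}_2$. As $M$ is simply connected, $\pi_1(M)$ is trivial and no such covering can exist. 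Hence the warped-product branch is impossible and the geodesic-ball alternative must hold.

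I do not expect a serious obstacle here, since Theorem~\ref{thmbach} already carries all the analytic weight and the corollary is essentially a topological reading of its conclusion. The only point that warrants care is verifying that the covering appearing in item (ii) is genuinely a \emph{connected} covering, so that simple connectivity really does obstruct it: the involution generating the $\mathbb{Z}_2$ action identifies the two boundary components of $I\times N$ into the single sphere $\partial M$, and the quotient is connected precisely because the total space is. With this confirmed, both hypotheses of the corollary are seen to play a role—the spherical boundary places us in the connected case (ii) and is compatible with the geodesic-ball conclusion, while simple connectivity eliminates the exotic $\mathbb{Z}_2$-quotient—so that $(M^n,g)$ is isometric to a geodesic ball in $\mathbb{R}^n$, $\mathbb{H}^n$, or $\mathbb{S}^n$, as claimed.
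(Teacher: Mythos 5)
Your proposal is correct and is exactly the deduction the paper intends: the spherical boundary forces $\Sigma$ to be connected, placing you in case (ii) of Theorem~\ref{thmbach}, and simple connectivity of $M$ rules out the $\mathbb{Z}_{2}$-quotient alternative, since the connected warped product $I\times N$ would otherwise be a nontrivial connected double cover of a simply connected manifold. The paper states the corollary as an immediate consequence without writing this out, so your argument supplies precisely the omitted reasoning.
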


\section{Critical metrics with harmonic Weyl tensor}

In this section, we focus on the classification of Miao-Tam critical metric with harmonic Weyl tensor. The main idea is to use the methods of Yun, Chang and Hwang \cite{s1,s2}, where the authors classified the well-known critical point equation (CPE metrics) under harmonicity of the Weyl tensor.

In what follows we shall consider $(M^{n},g)$, ($n\geq3$), as a $n$-dimensional Miao-Tam critical metric with harmonic Weyl tensor, (i.e., $divW=0$), smooth boundary $\partial M$ and constant scalar curvature $R$ such that the first Dirichlet eigenvalue of $(n-1)\Delta_{g}+R$ on $M$ is positive. As mentioned before, the potential function $f$ satisfies $f>0$ in the interior of $M$ and, if $\nu$ denotes the outward unit normal to $\partial M,$ then $\langle\nabla f,\nu\rangle<0$ on each connected component of $\partial M.$

In order to prove the Theorem~\ref{thmdivW}, we will initially show that at a regular point $q\in M$, we obtain $\nabla f$ as an eigenvector for Ricci tensor.

\begin{lemma}\label{autoRIC}
Let $(M^{n},g,f)$ be a Miao-Tam critical metric with $divW=0.$ Suppose $q\in M $ is a regular point of $f$, i.e., $|\nabla f|(q)\neq0.$ Then, $\nabla f$ is an eigenvector for Ric.
\end{lemma}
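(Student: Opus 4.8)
The plan is to convert the hypothesis $\mathrm{div}W=0$ into a purely algebraic identity for $\nabla f$ by routing it through the auxiliary tensor $T_{ijk}$ of (\ref{TensorT}). The first step is to observe that $\mathrm{div}W=0$ forces the Cotton tensor to vanish: for $n\ge 4$ this is immediate from (\ref{cottonwyel}), which gives $C_{ijk}=-\tfrac{n-2}{n-3}\nabla_{l}W_{ijkl}=0$. Substituting $C_{ijk}=0$ into (\ref{CTW}) yields the clean relation $T_{ijk}=-W_{ijkl}\nabla_{l}f$, and this is the starting point of the whole argument. I would then contract both sides with $\nabla_{k}f$ and compare the two resulting expressions.

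The right-hand side collapses under this contraction: since the Weyl tensor is antisymmetric in its last pair of indices, $W_{ijkl}=-W_{ijlk}$, while $\nabla_{k}f\,\nabla_{l}f$ is symmetric in $(k,l)$, one gets $W_{ijkl}\nabla_{k}f\,\nabla_{l}f=0$. Hence the contraction produces simply $T_{ijk}\nabla_{k}f=0$ at the regular point $q$. It then remains to expand the left-hand side directly from the definition (\ref{TensorT}). Writing $u_{i}:=\nabla_{i}f$ and $v_{i}:=R_{ik}\nabla_{k}f$ for the components of $Ric(\nabla f)$, the term carrying the scalar curvature $R$ becomes proportional to $u_{i}u_{j}-u_{j}u_{i}$ and drops out, while the first and third terms contribute $\tfrac{n-1}{n-2}(v_{i}u_{j}-v_{j}u_{i})$ and $\tfrac{1}{n-2}(u_{i}v_{j}-u_{j}v_{i})$ respectively. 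Using $u_{i}v_{j}-u_{j}v_{i}=-(v_{i}u_{j}-v_{j}u_{i})$, these combine with total coefficient $\tfrac{n-1}{n-2}-\tfrac{1}{n-2}=1$, so that
\[
T_{ijk}\nabla_{k}f = v_{i}u_{j}-v_{j}u_{i} = R_{ik}\nabla_{k}f\,\nabla_{j}f - R_{jk}\nabla_{k}f\,\nabla_{i}f .
\]

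Equating this with the vanishing from the previous step gives $v_{i}u_{j}=v_{j}u_{i}$ for all $i,j$. To conclude, I would fix an index $j_{0}$ with $u_{j_{0}}=\nabla_{j_{0}}f\neq 0$, which exists precisely because $q$ is a regular point; then $v_{i}=\lambda\,u_{i}$ with $\lambda=v_{j_{0}}/u_{j_{0}}$ independent of $i$, i.e. $Ric(\nabla f)=\lambda\,\nabla f$, so $\nabla f$ is an eigenvector of $Ric$ at $q$. The argument has no analytic difficulty; the only point demanding care is the expansion of $T_{ijk}\nabla_{k}f$, where one must correctly handle the trace term $R_{js}\nabla_{s}f$ and verify the exact coefficient cancellation. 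A secondary caveat worth flagging is the dimension $n=3$, where $W\equiv 0$ makes $\mathrm{div}W=0$ vacuous; there the same conclusion needs the Cotton tensor to vanish as an independent input (the natural reading of the harmonic condition in that dimension), after which $T_{ijk}=fC_{ijk}$ and the contraction argument proceeds identically.
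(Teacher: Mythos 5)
Your proof is correct and follows essentially the same route as the paper: both derive $T_{ijk}\nabla_{k}f=0$ from the vanishing of the Cotton tensor (via (\ref{cottonwyel})) together with the antisymmetry of $W$ in its last two indices, and then expand the definition (\ref{TensorT}) to obtain $R_{ik}\nabla_{k}f\,\nabla_{j}f-R_{jk}\nabla_{k}f\,\nabla_{i}f=0$. The only immaterial differences are that you conclude the proportionality of $Ric(\nabla f)$ and $\nabla f$ directly from this antisymmetric identity instead of passing to a Ricci-diagonalizing frame as the paper does, and that you explicitly flag the $n=3$ case (where the condition must be read as vanishing Cotton tensor), a point the paper's proof glosses over.
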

\begin{proof}
In fact,  we consider  an orthonormal frame $\{e_{1},\ldots,e_{n}\}$ diagonalizing $Ric$ at a point $q\in M^{n}$ such that $\nabla f(q)\neq0$, namely, we have that $Ric(e_{i})=\alpha_{i}e_{i}$, where $\alpha_{i}$ are the associated eigenvalues.

Now, from (\ref{cottonwyel}) jointly with Eq. (\ref{CTW}) and our hypotheses about Weyl tensor, we deduce that
$$0=fC_{ijk}\nabla_{k}f=T_{ijk}\nabla_{k}f$$
and, consequently, we use the definition (\ref{TensorT}) to infer
$$R_{jk}\nabla_{k}f\nabla_{i}f-R_{ik}\nabla_{k}f\nabla_{j}f=0.$$
Computing this last expression at $q\in M,$ we get
\begin{equation}\label{lambda}
(\alpha_{j}-\alpha_{i})\nabla_{i}f\nabla_{j}f=0.
\end{equation}
Hence, if we consider the following nonempty set $L=\{i;\nabla_{i}f\neq0\}$ then, from (\ref{lambda}) we have $\alpha_{i}=\alpha$, for all $i\in L,$ and consequently
$$Ric(\nabla f)=Ric\Big(\displaystyle\sum_{i\in L}\nabla_{i}fe_{i}\Big)=\sum_{i\in L}\nabla_{i}f\alpha_{i}e_{i}=\alpha\nabla f.$$
This is what we want to prove.
\end{proof}

Now, we shall obtain some properties at a regular point of $M^{n}$ under harmonicity of Weyl tensor. In what follows, let $q\in M^{n}$ be an arbitrary regular point and
consider an orthonormal frame  $\{e_{1},e_{2}, ..., e_{n}\}$ in a neighborhood of $q$ contained in $M$ such that $e_{1}=-\frac{\nabla f}{|\nabla f|}.$

Firstly, by direct computation using the twice contracted second Bianchi identity together with the Equations (\ref{eqfund1}) and (\ref{IdRicHess}), we get
\begin{eqnarray}\label{auxdivRIC1}
{\rm div}(\mathring{Ric}\nabla f)=\nabla_{i}(\mathring{R}_{ij}\nabla_{j}f)=\mathring{R}_{ij}\nabla_{i}\nabla_{j}f=f|\mathring{Ric}|^{2}.
\end{eqnarray}
On the other hand, from Lemma~\ref{autoRIC} we immediately have $\mathring{Ric}(\nabla f)=\mathring{R}_{11}\nabla f$ and, consequently, we may deduce
\begin{eqnarray}\label{auxdivRIC2}
{\rm div}(\mathring{Ric}\nabla f)&=&\nabla_{i}(\mathring{R}_{11}\nabla_{i} f)\nonumber\\
&=&\langle\nabla\mathring{R}_{11},\nabla f\rangle+\mathring{R}_{11}\Delta f\nonumber\\
&=&\langle\nabla\mathring{R}_{11},\nabla f\rangle-\frac{Rf+n}{n-1}\mathring{R}_{11}.
\end{eqnarray}
Hence, comparing (\ref{auxdivRIC1}) with (\ref{auxdivRIC2}), we get the following identity
\begin{equation}\label{normafRIC}
f|\mathring{Ric}|^{2}=\langle\nabla\mathring{R}_{11},\nabla f \rangle -\frac{Rf+n}{n-1}\mathring{R}_{11}.
\end{equation}

In the sequel, from (\ref{CTW}) jointly with the fact that $\nabla f$ is an eigenvector for Ric, we obtain  a very useful equality for our purpose, namely, for all $2\leq i,j\leq n,$
\begin{eqnarray*}
0&=&T_{i1j}+W_{i1j1}|\nabla f|\\
&=&\frac{n-1}{n-2}\left(R_{ij}-\frac{R}{n-1}g_{ij}+R_{11}g_{ij}+\frac{n-2}{n-1}W_{i1j1}\right)|\nabla f|\\
&=&\frac{n-1}{n-2}\left(\mathring{R}_{ij}+\frac{\mathring{R}_{11}}{n-1}g_{ij}+\frac{n-2}{n-1}W_{i1j1}\right)|\nabla f|,
\end{eqnarray*}
that is,
\begin{equation}\label{auxRICCI}
\mathring{R}_{ij}+\frac{\mathring{R}_{11}}{n-1}g_{ij}+\frac{n-2}{n-1}W^{\nu}_{ij}=0,
\end{equation}
for all $2\leq i,j\leq n$, where $W^{\nu}_{ij}=W_{i1j1}.$
In particular, we get the following expression for the squared norm of the traceless Ricci tensor:
\begin{eqnarray}\label{auxnormaRIC}
|\mathring{Ric}|^{2}&=&\mathring{R}_{11}^{2}+\sum_{i,j\geq2}^{n}\mathring{R}_{ij}^{2}\nonumber\\
&=&\mathring{R}_{11}^{2}+\sum_{i,j\geq2}^{n}\left(\frac{\mathring{R}_{11}}{n-1}g_{ij}\right)^{2}+\left(\frac{n-2}{n-1}\right)^{2}\sum_{i,j\geq2}^{n}W_{i1j1}^{2}\nonumber\\
&=&\frac{n}{n-1}\mathring{R}_{11}^{2}+\left(\frac{n-2}{n-1}\right)^{2}|W^{\nu}|^{2}.
\end{eqnarray}

We now gives a couple of lemmas which are essential to conclusion of the main result of this paper.

\begin{lemma}\label{nablafRic}
Let $(M,g,f)$ be a Miao-Tam critical metric with harmonic Weyl tensor. Then, for any regular point, we have
$$\frac{1}{2}\nabla f(|\mathring{Ric}|^{2})=2f\mathring{R}_{11}|\mathring{Ric}|^{2}-ftr(\mathring{Ric}^{3})-\Delta f \mathring{R}_{11}^{2}-\frac{\Delta f}{n}|\mathring{Ric}|^{2}.$$
\end{lemma}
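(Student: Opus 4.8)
The plan is to rewrite $\tfrac12\nabla f(|\mathring{Ric}|^2)$ as a pure divergence plus explicitly identifiable lower-order terms, and then to evaluate that divergence using the eigenvector property of $\nabla f$ coming from Lemma~\ref{autoRIC}. First I would write $\tfrac12\nabla f(|\mathring{Ric}|^2)=\mathring R_{ij}\nabla_k\mathring R_{ij}\nabla_k f$, and observe that since $R$ is constant we have $\nabla_k\mathring R_{ij}=\nabla_k R_{ij}$. The harmonic Weyl hypothesis, through the relation (\ref{cottonwyel}), forces the Cotton tensor to vanish, so that (constant scalar curvature again) $\nabla_k R_{ij}=\nabla_i R_{kj}$. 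Substituting this single index swap — which is precisely where harmonicity of $W$ is used — gives $\tfrac12\nabla f(|\mathring{Ric}|^2)=\mathring R_{ij}\nabla_i\mathring R_{kj}\nabla_k f$.

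Next I would apply the product rule pointwise to split off a divergence, namely $\mathring R_{ij}\nabla_i\mathring R_{kj}\nabla_k f=\nabla_i(\mathring R_{ij}\mathring R_{kj}\nabla_k f)-(\nabla_i\mathring R_{ij})\mathring R_{kj}\nabla_k f-\mathring R_{ij}\mathring R_{kj}\nabla_i\nabla_k f$. The contracted second Bianchi identity together with $R$ constant gives $\nabla_i\mathring R_{ij}=0$, so the middle term drops out; and (\ref{IdRicHess}) gives $\nabla_i\nabla_k f=f\mathring R_{ik}+\tfrac{\Delta f}{n}g_{ik}$, so the last term becomes exactly $f\operatorname{tr}(\mathring{Ric}^3)+\tfrac{\Delta f}{n}|\mathring{Ric}|^2$. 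At this stage the identity reduces to evaluating the single divergence $\nabla_i(\mathring R_{ij}\mathring R_{kj}\nabla_k f)$.

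To evaluate it I would invoke Lemma~\ref{autoRIC}: since $\mathring{Ric}(\nabla f)=\mathring R_{11}\nabla f$, applying $\mathring{Ric}$ twice collapses the argument to $\mathring R_{ij}\mathring R_{kj}\nabla_k f=\mathring R_{11}^2\nabla_i f$, so that $\nabla_i(\mathring R_{ij}\mathring R_{kj}\nabla_k f)=2\mathring R_{11}\langle\nabla\mathring R_{11},\nabla f\rangle+\mathring R_{11}^2\Delta f$. Finally, combining (\ref{normafRIC}) with (\ref{eqtrace}) yields $\langle\nabla\mathring R_{11},\nabla f\rangle=f|\mathring{Ric}|^2-\Delta f\,\mathring R_{11}$; feeding this in collapses the divergence to $2f\mathring R_{11}|\mathring{Ric}|^2-\Delta f\,\mathring R_{11}^2$, and assembling the three contributions produces exactly the claimed identity.

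The main obstacle, in my view, is organizational rather than computational: the natural-looking route of commuting the third covariant derivatives of $f$ via (\ref{idRicci}) and the fundamental equation ends up reproducing $\tfrac12\nabla f(|\mathring{Ric}|^2)$ itself on the right-hand side, so it yields only a tautology and never isolates the quantity of interest. The efficient organization above sidesteps this by exploiting the divergence-freeness of $\mathring{Ric}$ together with the eigenvector identity $\mathring R_{ij}\mathring R_{kj}\nabla_k f=\mathring R_{11}^2\nabla_i f$, so that no three-derivative commutation is ever needed and every remaining term is handled by the already-established relations (\ref{IdRicHess}), (\ref{normafRIC}), and (\ref{eqtrace}).
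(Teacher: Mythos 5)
Your proof is correct and follows essentially the same route as the paper's: both use the vanishing of the Cotton tensor (via (\ref{cottonwyel})) to swap the derivative index, the eigenvector property of $\nabla f$ from Lemma~\ref{autoRIC}, and the identity (\ref{normafRIC}) combined with (\ref{eqtrace}) to eliminate $\langle\nabla\mathring{R}_{11},\nabla f\rangle$. The only difference is bookkeeping: you package the intermediate terms as the full divergence $\nabla_i(\mathring{R}_{ij}\mathring{R}_{kj}\nabla_k f)$ and invoke $\nabla_i\mathring{R}_{ij}=0$, whereas the paper keeps $\mathring{R}_{ij}$ outside the derivative and applies Leibniz inside the contraction; the two computations are term-by-term equivalent.
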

\begin{proof}
To begin with, since the Cotton tensor vanishes on $M^{n}$ (see Eq. (\ref{cottonwyel})), we deduce that
\begin{eqnarray*}
\frac{1}{2}\nabla f(|\mathring{Ric}|^{2})&=&\mathring{R}_{ij}\nabla_{k}f\nabla_{k}\mathring{R}_{ij}\\
&=&\mathring{R}_{ij}\nabla_{k}f\nabla_{i}\mathring{R}_{kj}\\
&=&\mathring{R}_{ij}(\nabla_{i}(\mathring{R}_{kj}\nabla_{k}f)-\nabla_{i}\nabla_{k}f\mathring{R}_{kj}).
\end{eqnarray*}
Thus, as we already know that $\mathring{Ric}(\nabla f)=\mathring{R}_{11}\nabla f$, then the above expression can be rewritten in the following way
\begin{eqnarray*}
\frac{1}{2}\nabla f(|\mathring{Ric}|^{2})&=&\mathring{R}_{ij}\nabla_{i}\mathring{R}_{11}\nabla_{j}f+\mathring{R}_{11}\mathring{R}_{ij}\nabla_{i}\nabla_{j}f\\
&&-fR_{ik}\mathring{R}_{ij}\mathring{R}_{jk}-(\Delta f+1)|\mathring{Ric}|^{2}\\
&=&\mathring{R}_{11}\langle\nabla\mathring{R}_{11} ,\nabla f\rangle+f\mathring{R}_{11}|\mathring{Ric}|^{2}-ftr(\mathring{Ric}^{3})\\
&&-\frac{R}{n}f|\mathring{Ric}|^{2}-(\Delta f+1)|\mathring{Ric}|^{2}.
\end{eqnarray*}
Therefore, it follows immediately from equations (\ref{normafRIC}) and (\ref{eqtrace}) that
\begin{eqnarray*}
\frac{1}{2}\nabla f(|\mathring{Ric}|^{2})&=&2f\mathring{R}_{11}|\mathring{Ric}|^{2}-\Delta f\mathring{R}_{11}^{2}+\mathring{R}_{11}f|\mathring{Ric}|^{2}\\
&&-ftr(\mathring{Ric}^{3})+\frac{(n-1)\Delta f+n}{n}|\mathring{Ric}|^{2}-(\Delta f+1)|\mathring{Ric}|^{2}\\
&=&2f\mathring{R}_{11}|\mathring{Ric}|^{2}-\Delta f\mathring{R}_{11}^{2}-ftr(\mathring{Ric}^{3})-\frac{\Delta f}{n}|\mathring{Ric}|^{2},
\end{eqnarray*}
as desired.
\end{proof}

Proceeding, we combine (\ref{normafRIC}) jointly with Lemma~\ref{nablafRic} in order to get the following expression
\begin{eqnarray*}
\frac{1}{2}\left(\frac{n-2}{n-1}\right)^{2}\nabla f(|W^{\nu}|^{2})&=&\frac{1}{2}\nabla f(|\mathring{Ric}|^{2})-\frac{n}{n-1}\mathring{R}_{11}\langle\nabla f,\nabla\mathring{R}_{11}\rangle\\
&=&2f\mathring{R}_{11}|\mathring{Ric}|^{2}-ftr(\mathring{Ric}^{3})-\Delta f \mathring{R}_{11}^{2}-\frac{\Delta f}{n}|\mathring{Ric}|^{2}\\
&&-\frac{n}{n-1}(f\mathring{R}_{11}|\mathring{Ric}|^{2}-\Delta f\mathring{R}_{11}^{2})\\
&=&\frac{n-2}{n-1}f\mathring{R}_{11}|\mathring{Ric}|^{2}-ftr(\mathring{Ric}^{3})+\frac{\Delta f}{n}\left(|\mathring{Ric}|^{2}-\frac{n}{n-1}\mathring{R}_{11}^{2}\right).
\end{eqnarray*}
This identity should be summarized in the following result.

\begin{lemma}\label{nablafW}
Let $(M,g,f)$ be a Miao-Tam critical metric with harmonic Weyl tensor. Then, for any regular point, we have
$$\frac{1}{2}\left(\frac{n-2}{n-1}\right)^{2}\nabla f(|W^{\nu}|^{2})=\frac{n-2}{n-1}f\mathring{R}_{11}|\mathring{Ric}|^{2}-ftr(\mathring{Ric}^{3})-\left(\frac{n-2}{n-1}\right)^{2}\frac{\Delta f}{n}|W^{\nu}|^{2}.$$
\end{lemma}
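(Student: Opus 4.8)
The plan is to obtain the identity by a direct differentiation of the pointwise relation already established in (\ref{auxnormaRIC}), feeding in the value of $\tfrac12\nabla f(|\mathring{Ric}|^{2})$ supplied by Lemma~\ref{nablafRic}. First I would rewrite (\ref{auxnormaRIC}) in the form
$$\Big(\frac{n-2}{n-1}\Big)^{2}|W^{\nu}|^{2}=|\mathring{Ric}|^{2}-\frac{n}{n-1}\mathring{R}_{11}^{2},$$
which is available at any regular point because Lemma~\ref{autoRIC} forces $\nabla f$ to be an eigendirection of $\mathring{Ric}$, with $\mathring{Ric}(\nabla f)=\mathring{R}_{11}\nabla f$. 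The key observation is that this single scalar identity already packages all the relevant Weyl information into $|W^{\nu}|^{2}$, so differentiating it should convert a statement about $\nabla f(|\mathring{Ric}|^{2})$, which I can control, into the desired statement about $\nabla f(|W^{\nu}|^{2})$.

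Applying $\tfrac12\nabla f(\cdot)$ to both sides, the right-hand side produces $\tfrac12\nabla f(|\mathring{Ric}|^{2})$ together with $-\frac{n}{n-1}\mathring{R}_{11}\langle\nabla\mathring{R}_{11},\nabla f\rangle$, since $\tfrac12\nabla f(\mathring{R}_{11}^{2})=\mathring{R}_{11}\langle\nabla\mathring{R}_{11},\nabla f\rangle$. The first term is handed to me verbatim by Lemma~\ref{nablafRic}. For the cross term I would invoke the identity (\ref{normafRIC}) and replace $\tfrac{Rf+n}{n-1}$ by $-\Delta f$ using the trace equation (\ref{eqtrace}); this gives $\langle\nabla\mathring{R}_{11},\nabla f\rangle=f|\mathring{Ric}|^{2}-\Delta f\,\mathring{R}_{11}$, hence
$$\mathring{R}_{11}\langle\nabla\mathring{R}_{11},\nabla f\rangle=f\mathring{R}_{11}|\mathring{Ric}|^{2}-\Delta f\,\mathring{R}_{11}^{2}.$$

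Substituting both expressions and grouping the four resulting terms is then purely mechanical. The coefficient of $f\mathring{R}_{11}|\mathring{Ric}|^{2}$ collapses from $2-\frac{n}{n-1}$ to $\frac{n-2}{n-1}$, the cubic term $-f\,tr(\mathring{Ric}^{3})$ passes through unchanged, and the terms carrying $\Delta f$ combine into $\frac{\Delta f}{n}\bigl(\frac{n}{n-1}\mathring{R}_{11}^{2}-|\mathring{Ric}|^{2}\bigr)$. Re-applying (\ref{auxnormaRIC}) to this bracket rewrites it as $-\bigl(\frac{n-2}{n-1}\bigr)^{2}\frac{\Delta f}{n}|W^{\nu}|^{2}$, and assembling the three surviving pieces reproduces exactly the claimed formula. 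I do not anticipate a conceptual obstacle; the only care needed is bookkeeping of the constants, in particular the sign coming from $\frac{Rf+n}{n-1}=-\Delta f$, and the deliberate use of (\ref{auxnormaRIC}) in one direction at the start and in the opposite direction at the end, which is precisely what lets the Weyl term re-materialise on the right-hand side instead of leaving a residual traceless-Ricci expression.
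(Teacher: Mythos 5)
Your proposal is correct and follows essentially the same route as the paper: differentiate the identity $\bigl(\tfrac{n-2}{n-1}\bigr)^{2}|W^{\nu}|^{2}=|\mathring{Ric}|^{2}-\tfrac{n}{n-1}\mathring{R}_{11}^{2}$ along $\nabla f$, substitute Lemma~\ref{nablafRic} for $\tfrac12\nabla f(|\mathring{Ric}|^{2})$ and the combination of (\ref{normafRIC}) with (\ref{eqtrace}) for $\mathring{R}_{11}\langle\nabla\mathring{R}_{11},\nabla f\rangle$, then reuse (\ref{auxnormaRIC}) to repackage the $\Delta f$ terms. Your bookkeeping is in fact cleaner than the paper's displayed derivation, whose penultimate line carries a sign slip ($+\tfrac{\Delta f}{n}(\cdots)$ instead of $-\tfrac{\Delta f}{n}(\cdots)$) that silently corrects itself in the stated lemma; your version has the sign right throughout.
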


Finally, to prove the next result , we need the following Bochner type-formula established for $V$-static metrics which was proved by the first author and Ribeiro Jr. in \cite{balt17}. Here, for our purpose, we write this result just for Miao-Tam critical metrics with harmonic Weyl curvature.

\begin{proposition}\label{propA}
Let $(M^{n},\,g,\,f)$ be a Miao-Tam critical metric with harmonic Weyl tensor. Then we have:
\begin{eqnarray*}
\frac{1}{2}{\rm div}(f\nabla|Ric|^{2})&=&|\nabla Ric|^{2}f+ \frac{n}{n-1}|\mathring{Ric}|^{2}+\frac{2}{n-1}Rf|\mathring{Ric}|^{2}\\
&&+\frac{2n}{n-2}ftr(\mathring{Ric}^{3})-2fW_{ijkl}R_{ik}R_{jl}.
\end{eqnarray*}
\end{proposition}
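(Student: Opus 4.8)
The plan is to expand the divergence as
\[
\frac{1}{2}\mathrm{div}\big(f\nabla|Ric|^{2}\big)=\frac{1}{2}f\Delta|Ric|^{2}+\frac{1}{2}\langle\nabla f,\nabla|Ric|^{2}\rangle ,
\]
and to reduce each summand to the algebraic curvature quantities $|\nabla Ric|^{2}$, $|\mathring{Ric}|^{2}$, $\mathrm{tr}(\mathring{Ric}^{3})$ and $W_{ijkl}R_{ik}R_{jl}$ by means of the structure equations already at our disposal. First I would treat the Laplacian term. Since a Miao--Tam critical metric has constant scalar curvature and the harmonicity $\mathrm{div}\,W=0$ forces the Cotton tensor to vanish through (\ref{cottonwyel}), Lemma~\ref{deltaRICCI} collapses to
\[
\frac{1}{2}f\Delta|Ric|^{2}=f|\nabla Ric|^{2}+f\big(R_{ij}R_{ik}R_{jk}-R_{ik}R_{jl}R_{ijkl}\big).
\]

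Next I would insert the Weyl decomposition (\ref{weyl}) into the term $R_{ik}R_{jl}R_{ijkl}$: the trace-freeness of $W$ isolates the contraction $W_{ijkl}R_{ik}R_{jl}$, while the remaining pieces contract against the metric to produce multiples of $|Ric|^{2}$, $\mathrm{tr}(Ric^{3})$ and $R^{3}$. Writing $Ric=\mathring{Ric}+\tfrac{R}{n}g$ and using $\mathrm{tr}\,\mathring{Ric}=0$ re-expresses these in terms of $\mathrm{tr}(\mathring{Ric}^{3})$ and $|\mathring{Ric}|^{2}$; a direct check shows that the $R^{3}$ contributions cancel, leaving the Laplacian piece in the form $f|\nabla Ric|^{2}+\tfrac{n}{n-2}f\,\mathrm{tr}(\mathring{Ric}^{3})-f\,W_{ijkl}R_{ik}R_{jl}+\tfrac{R}{n-1}f|\mathring{Ric}|^{2}$.

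For the gradient term I would observe that, $R$ being constant, $\langle\nabla f,\nabla|Ric|^{2}\rangle=\nabla f(|\mathring{Ric}|^{2})$, which is exactly the quantity computed in Lemma~\ref{nablafRic}. That computation rests on the vanishing of the Cotton tensor, on the fundamental equation (\ref{eqfund1}) together with (\ref{IdRicHess}), and on the eigenvector property $\mathring{Ric}(\nabla f)=\mathring{R}_{11}\nabla f$ supplied by Lemma~\ref{autoRIC}; it produces, besides a $\mathrm{tr}(\mathring{Ric}^{3})$ term, contributions carrying $\mathring{R}_{11}^{2}$ and $\mathring{R}_{11}|\mathring{Ric}|^{2}$ as well as a $\Delta f$-factor to be removed via the trace equation (\ref{eqtrace}).

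Adding the two contributions and using (\ref{eqtrace}) to eliminate $\Delta f$, one is left with a combination of $\mathrm{tr}(\mathring{Ric}^{3})$, $W_{ijkl}R_{ik}R_{jl}$, $|\mathring{Ric}|^{2}$ and the normal quantities $\mathring{R}_{11}^{2}$ and $|W^{\nu}|^{2}$. The main obstacle is precisely this final reconciliation: the frame-independent right-hand side of the proposition contains no reference to the $\nabla f$-direction, so the $\mathring{R}_{11}$- and $|W^{\nu}|^{2}$-terms must be traded back for invariant quantities, which is exactly the role of the pointwise identities (\ref{auxRICCI}) and (\ref{auxnormaRIC}). Carrying out these substitutions and checking that the coefficients assemble into $\tfrac{n}{n-1}|\mathring{Ric}|^{2}+\tfrac{2}{n-1}Rf|\mathring{Ric}|^{2}+\tfrac{2n}{n-2}f\,\mathrm{tr}(\mathring{Ric}^{3})-2fW_{ijkl}R_{ik}R_{jl}$ is the delicate bookkeeping, since it requires tracking the cubic contraction $\mathrm{tr}(\mathring{Ric}^{3})$ and the Weyl--Ricci contraction through the block structure of $\mathring{Ric}$ along the level sets of $f$, where the off-diagonal components in the direction of $\nabla f$ vanish.
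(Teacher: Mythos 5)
Your decomposition and the first two steps are correct and non-circular: evaluating $\tfrac{1}{2}f\Delta|Ric|^{2}$ via Lemma~\ref{deltaRICCI} (with $C_{ijk}=0$ and $R$ constant) together with the Weyl-decomposition identity $R_{ij}R_{ik}R_{jk}-R_{ik}R_{jl}R_{ijkl}=\tfrac{R}{n-1}|\mathring{Ric}|^{2}+\tfrac{n}{n-2}\mathrm{tr}(\mathring{Ric}^{3})-W_{ijkl}R_{ik}R_{jl}$ is fine, and Lemma~\ref{nablafRic} legitimately handles the gradient term, since its proof is independent of Proposition~\ref{propA}. (For the record, the paper never proves Proposition~\ref{propA} at all: it imports it from \cite{balt17}, where it is obtained by a direct Bochner-type computation on $\mathrm{div}(f\nabla|Ric|^{2})$ using the commutation formula (\ref{idRicci}) and the fundamental equation (\ref{eqfund1}); no adapted frame or level-set structure enters that argument.)

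The gap is your final step. Subtracting your two computed pieces from the asserted right-hand side, what remains to be proved is exactly
\begin{equation*}
-fW_{ijkl}R_{ik}R_{jl}=\frac{n-1}{n}\Delta f|\mathring{Ric}|^{2}-\Delta f\mathring{R}_{11}^{2}+2f\mathring{R}_{11}|\mathring{Ric}|^{2}-\frac{2(n-1)}{n-2}f\,\mathrm{tr}(\mathring{Ric}^{3}),
\end{equation*}
which is precisely Lemma~\ref{divWnu} combined with (\ref{auxdivWnu}) --- a statement the paper derives \emph{from} Proposition~\ref{propA}, that derivation being the proposition's only use in the paper. So at the point you label ``delicate bookkeeping'' your argument is circular. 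Moreover, that bookkeeping genuinely cannot be carried out with (\ref{auxRICCI}) and (\ref{auxnormaRIC}) alone: by trace-freeness of $W$ and the block structure from Lemma~\ref{autoRIC}, in the adapted frame one has $W_{ijkl}R_{ik}R_{jl}=2\mathring{R}_{11}W^{\nu}_{ab}\mathring{R}_{ab}+W_{acbd}\mathring{R}_{ab}\mathring{R}_{cd}$ (sums over tangential indices), and substituting (\ref{auxRICCI}) leaves the cubic term $\tfrac{(n-2)^{2}}{(n-1)^{2}}W_{acbd}W^{\nu}_{ab}W^{\nu}_{cd}$, which involves the purely tangential Weyl components $W_{abcd}$. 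Those components are not determined by $\mathring{R}_{11}$ and $W^{\nu}$ --- trace-freeness only fixes their traces, $\sum_{a}W_{acad}=-W^{\nu}_{cd}$ --- so no substitution of the two pointwise identities can close the required relation. The identity you need carries second-derivative (Bochner) information about $Ric$, which is exactly the differential input supplied by the computation in \cite{balt17} and never produced by your outline.
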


Now, we are in position to present our last lemma in order to classify such critical metrics under harmonicity of the Weyl tensor. More precisely, we have the following result.

\begin{lemma}\label{divWnu}
Let $(M,g,f)$ be a Miao-Tam critical metric with harmonic Weyl tensor. Then, for any regular point, we have
$$\frac{n-2}{n-1}{\rm div}(|W^{\nu}|^{2}\nabla f)=-fW_{ijkl}R_{ik}R_{jl}.$$
\end{lemma}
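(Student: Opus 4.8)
The plan is to express both sides at a regular point as polynomials in the traceless Ricci tensor and its contractions, and then to match them. For the left-hand side I would start from
$$\frac{n-2}{n-1}{\rm div}(|W^{\nu}|^{2}\nabla f)=\frac{n-2}{n-1}\Big(\langle\nabla|W^{\nu}|^{2},\nabla f\rangle+|W^{\nu}|^{2}\Delta f\Big),$$
feed in Lemma~\ref{nablafW} for the radial derivative $\nabla f(|W^{\nu}|^{2})$, and use the pointwise identity (\ref{auxnormaRIC}), rewritten as $\left(\frac{n-2}{n-1}\right)^{2}|W^{\nu}|^{2}=|\mathring{Ric}|^{2}-\frac{n}{n-1}\mathring{R}_{11}^{2}$, to handle the zeroth-order term $|W^{\nu}|^{2}\Delta f$. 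Collecting terms gives a first expression
$$\frac{n-2}{n-1}{\rm div}(|W^{\nu}|^{2}\nabla f)=2f\mathring{R}_{11}|\mathring{Ric}|^{2}-\frac{2(n-1)}{n-2}ftr(\mathring{Ric}^{3})-\Delta f\,\mathring{R}_{11}^{2}+\frac{n-1}{n}\Delta f\,|\mathring{Ric}|^{2},$$
which contains no Weyl term at all.

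The curvature contraction $fW_{ijkl}R_{ik}R_{jl}$ must therefore be produced from the Bochner machinery. I would evaluate $\frac{1}{2}{\rm div}(f\nabla|Ric|^{2})=\frac{1}{2}f\Delta|Ric|^{2}+\frac{1}{2}\langle\nabla f,\nabla|Ric|^{2}\rangle$ in two ways: directly by Proposition~\ref{propA}, and by substituting Lemma~\ref{deltaRICCI} for $\frac{1}{2}f\Delta|Ric|^{2}$ (the Cotton terms drop since $divW=0$ forces $C_{ijk}=0$) together with Lemma~\ref{nablafRic} for $\frac{1}{2}\langle\nabla f,\nabla|Ric|^{2}\rangle$; here $\nabla|Ric|^{2}=\nabla|\mathring{Ric}|^{2}$ because $R$ is constant. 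Equating the two evaluations, the terms $f|\nabla Ric|^{2}$ cancel, and one is left with a purely pointwise identity that can be solved for $fW_{ijkl}R_{ik}R_{jl}$ in terms of $ftr(Ric^{3})$, $fR_{ik}R_{jl}R_{ijkl}$, $|\mathring{Ric}|^{2}$, $\mathring{R}_{11}^{2}$ and $\Delta f$.

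To finish I would decompose the two cubic curvature scalars. Writing $Ric=\mathring{Ric}+\frac{R}{n}g$ yields $tr(Ric^{3})=tr(\mathring{Ric}^{3})+\frac{3R}{n}|\mathring{Ric}|^{2}+\frac{R^{3}}{n^{2}}$, while plugging the Weyl decomposition (\ref{weyl}) into $R_{ik}R_{jl}R_{ijkl}$ writes it as $W_{ijkl}R_{ik}R_{jl}$ plus terms in $tr(Ric^{3})$, $R|Ric|^{2}$ and $R^{3}$ (using that $W$ is totally trace-free, so that $W_{ijkl}R_{ik}R_{jl}=W_{ijkl}\mathring{R}_{ik}\mathring{R}_{jl}$). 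After substituting these and eliminating $\Delta f$ through the trace equation (\ref{eqtrace}), $\Delta f=-\frac{Rf+n}{n-1}$, I would check that the resulting formula for $-fW_{ijkl}R_{ik}R_{jl}$ reproduces exactly the first expression above, which is the claim.

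The delicate point is precisely this last bookkeeping. The decomposition of $R_{ik}R_{jl}R_{ijkl}$ produces numerous lower-order terms, and matching only works after (\ref{eqtrace}) is used to trade $\Delta f$ against $Rf$: one must verify that the purely scalar contributions proportional to $R^{3}f$ cancel, and that the coefficients of $Rf|\mathring{Ric}|^{2}$ and $Rf\mathring{R}_{11}^{2}$, as well as the coefficient $\frac{2(n-1)}{n-2}$ of $ftr(\mathring{Ric}^{3})$, line up precisely. These cancellations are the crux that force the two independent computations to agree.
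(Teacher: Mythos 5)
Your proposal is correct and takes essentially the same route as the paper: your first expression is precisely the paper's identity (\ref{auxdivWnu}), and your two-way evaluation of $\frac{1}{2}{\rm div}(f\nabla|Ric|^{2})$ combines Proposition~\ref{propA}, Lemma~\ref{deltaRICCI} and Lemma~\ref{nablafRic} in exactly the way the paper does. The only difference is that you carry out the cubic-curvature decomposition (trading $tr(Ric^{3})$ and $R_{ik}R_{jl}R_{ijkl}$ for $tr(\mathring{Ric}^{3})$, $R|\mathring{Ric}|^{2}$ and $W_{ijkl}R_{ik}R_{jl}$) by hand, whereas the paper cites it as \cite[Lemma 4]{balt17}; the cancellations you flag (the $R^{3}f$ terms and the coefficients of $Rf|\mathring{Ric}|^{2}$ and $ftr(\mathring{Ric}^{3})$) do check out.
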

\begin{proof}
From Lemma~\ref{nablafW} we achieve
\begin{eqnarray*}
\frac{n-2}{n-1}{\rm div}(|W^{\nu}|^{2}\nabla f)&=&\frac{n-2}{n-1}|W^{\nu}|^{2}\Delta f+\frac{n-2}{n-1}\nabla f(|W^{\nu}|^{2})\\
&=&\frac{(n-2)^{2}}{n(n-1)}|W^{\nu}|^{2}\Delta f+2f\mathring{R}_{11}|\mathring{Ric}|^{2}-\frac{2(n-1)}{n-2}ftr(\mathring{Ric}^{3}).
\end{eqnarray*}
Hence, substituting (\ref{auxnormaRIC}) in the above expression, we arrive at
\begin{eqnarray}\label{auxdivWnu}
\frac{n-2}{n-1}{\rm div}(|W^{\nu}|^{2}\nabla f)&=&\frac{n-1}{n}\Delta f|\mathring{Ric}|^{2}-\Delta f\mathring{R}_{11}^{2}+2f\mathring{R}_{11}|\mathring{Ric}|^{2}\nonumber\\
&&-\frac{2(n-1)}{n-2}ftr(\mathring{Ric}^{3}).
\end{eqnarray}

On the other hand, it follows from Proposition~\ref{propA} that
\begin{eqnarray}\label{auxnablaNRIC}
\frac{1}{2}\nabla f(|\mathring{Ric}|^{2})&=&-\frac{1}{2}f\Delta|\mathring{Ric}|^{2}+\frac{1}{2}div(f\nabla|\mathring{Ric}|^{2})\nonumber\\
&=&-\left(\frac{1}{2}\Delta|\mathring{Ric}|^{2}-|\nabla Ric|^{2}\right)f+\frac{n}{n-1}|\mathring{Ric}|^{2}+2B,
\end{eqnarray}
where $B$ is the function defined by
$$B=\frac{1}{n-1}fR|\mathring{Ric}|^{2}+\frac{n}{n-2}ftr(\mathring{Ric}^{3})-fW_{ijkl}R_{ik}R_{jl}.$$
A straightforward computation shows that the function $B$ defined above assume the following form
$$B=f(R_{ij}R_{ik}R_{jk}-R_{ik}R_{jl}R_{ijkl}),$$
see for instance, \cite[Lemma 4]{balt17}. Hence, by Lemma~\ref{deltaRICCI} jointly with the fact that $M$ has constant scalar curvature and harmonic Weyl tensor, we arrive at
\begin{equation*}
\frac{1}{2}f\Delta|Ric|^{2}=f|\nabla Ric|^{2}+B,
\end{equation*}
which combined with (\ref{auxnablaNRIC}) yields
\begin{eqnarray*}
\frac{1}{2}\nabla f(|\mathring{Ric}|^{2})&=&\frac{n}{n-1}|\mathring{Ric}|^{2}+B\\
&=&-\Delta f |\mathring{Ric}|^{2}+\frac{n}{n-2}ftr(\mathring{Ric}^{3})-fW_{ijkl}R_{ik}R_{jl}.
\end{eqnarray*}

Therefore, after we compare this expression with Lemma~\ref{nablafRic}, it is not difficult to see that
\begin{eqnarray*}
-fW_{ijkl}R_{ik}R_{jl}&=&\frac{n-1}{n}\Delta f|\mathring{Ric}|^{2}-\Delta f\mathring{R}_{11}^{2}+2f\mathring{R}_{11}|\mathring{Ric}|^{2}-\frac{2(n-1)}{n-2}ftr(\mathring{Ric}^{3})\\
&=&\frac{n-2}{n-1}{\rm div}(|W^{\nu}|^{2}\nabla f),
\end{eqnarray*}
where in the last equality we have used (\ref{auxdivWnu}). This finishes the proof of the lemma.

\end{proof}

\subsection{Proof of the Theorem~\ref{thmdivW}}
\begin{proof}
In what follows, consider $\Sigma_{0}$ as a connected component of the boundary $f^{-1}(0)$ and $\widetilde{M}_{0}$ be the connected component of the open set $\{x\in M;|\nabla f|>0\}$ such that its closure contains $\Sigma_{0}$ and let $M_{0}=\widetilde{M}_{0}\cup\Sigma_{0}.$ Next, for $\epsilon>0$ sufficiently small, we will consider the set $V_{\epsilon}=\{0\leq f\leq\epsilon\}\cap M_{0}.$

The main idea is to prove that $W(\cdot,\nabla f,\cdot,\nabla f)$ vanishes on $V_{\epsilon}$ and, as we will see, this is sufficient in order to conclude our theorem. In fact, combining Lemma~\ref{nablafW} and \ref{divWnu} we deduce
\begin{eqnarray*}
\frac{n-2}{n-1}fW_{ijkl}R_{ik}R_{jl}&=&-\left(\frac{n-2}{n-1}\right)^{2}(|W^{\nu}|^{2}\Delta f+\nabla f(|W^{\nu}|^{2}))\\
&=&-\left(\frac{n-2}{n-1}\right)^{2}|W^{\nu}|^{2}\Delta f-\frac{2(n-2)}{n-1}f\mathring{R}_{11}|\mathring{Ric}|^{2}\\
&&+2ftr(\mathring{Ric}^{3})+2\left(\frac{n-2}{n-1}\right)^{2}\frac{\Delta f}{n}|W^{\nu}|^{2}\\
&=&\frac{1}{n}\left(\frac{n-2}{n-1}\right)^{3}|W^{\nu}|^{2}(fR+n)-\frac{2(n-2)}{n-1}f\mathring{R}_{11}|\mathring{Ric}|^{2}\\
&&+2ftr(\mathring{Ric}^{3}).
\end{eqnarray*}
So, since $f|_{\Sigma_{0}}=0$, it is immediate to check that $|W^{\nu}|^{2}\equiv0$ on $\Sigma_{0}.$ In addition, still restricted to the boundary $\Sigma_{0},$ we may use the equality (\ref{auxRICCI}), to infer
$$\mathring{R}_{ij}=-\frac{\mathring{R}_{11}}{n-1}g_{ij},$$
for all $2\leq i,j\leq n$, and this allows us to conclude, jointly with Lemma~\ref{autoRIC},  that
\begin{eqnarray*}
W_{ijkl}R_{ik}R_{jl}=\sum_{i,j\neq1}W_{ijkl}R_{ik}R_{jl}=\left(\frac{R}{n}-\frac{\mathring{R}_{11}}{n-1}\right)W_{ijil}R_{jl}=0,
\end{eqnarray*}
where in the last step we have used that Weyl tensor has trace-free in any two indices.

Next, take $\zeta>0$ arbitrary and after a straightforward computation, we get
$$div((\zeta+|W^{\nu}|^{2})\nabla f)\Big|_{\Sigma_{0}}=-\frac{n\zeta}{n-1}<0.$$
Thus, by continuity of the function $div((\zeta+|W^{\nu}|^{2})\nabla f)$ there exist $\epsilon_{0}>0$ (possibly smaller than $\epsilon$) such that
\begin{equation}\label{inqdivzeta}
div((\zeta+|W^{\nu}|^{2})\nabla f)<0,
\end{equation}
on $V_{\epsilon_{0}}=\{0\leq f\leq\epsilon_{0}\}\cap M_{0}.$ For simplicity, we will also denote this new neighborhood by $V_{\epsilon}$. Next, it is not difficult to see, using  (\ref{inqdivzeta}), that the following inequality is true
\begin{eqnarray*}
div(|W^{\nu}|^{2}\nabla f)\Big|_{V_{\epsilon}}&=&-\zeta\Delta f+div((\zeta+|W^{\nu}|^{2})\nabla f)\\
&<&\frac{\zeta R}{n-1}f+\frac{\zeta n}{n-1}.
\end{eqnarray*}
Thus, when $\zeta\rightarrow0$, we conclude that
$$div(|W^{\nu}|^{2}\nabla f)\Big|_{V_{\epsilon}}\leq0.$$

In the sequel, using Lemma~\ref{nablafW} again jointly with Stokes' theorem, we may deduce
\begin{eqnarray*}
0&\leq&\int_{V_{\epsilon}}fW_{ijkl}R_{ik}R_{jl}=-\frac{n-2}{n-1}\int_{V_{\epsilon}} div(|W^{\nu}|^{2}\nabla f)\\
&=&-\frac{n-2}{n-1}\int_{\{f=0\}}|W^{\nu}|^{2}\langle\nabla f,\nu_{0}\rangle-\frac{n-2}{n-1}\int_{\{f=\epsilon\}}|W^{\nu}|^{2}\langle\nabla f,\nu_{\epsilon}\rangle\\
&=&-\frac{n-2}{n-1}\int_{\{f=\epsilon\}}|W^{\nu}|^{2}|\nabla f|\leq0,
\end{eqnarray*}
where $\nu_{0}=-\frac{\nabla f}{|\nabla f|}$ and $\nu_{\epsilon}=\frac{\nabla f}{|\nabla f|}$ denotes the outward unit normal to $\Sigma_{0}=\{f=0\}$ and $\{f=\epsilon\},$ respectively.
Therefore,
$$div(|W^{\nu}|^{2}\nabla f)=-\frac{n-1}{n-2}fW_{ijkl}R_{ik}R_{jl}=0,$$
on $V_{\epsilon},$ and
$$W^{\nu}|_{\{f=\epsilon\}}\equiv0.$$

Hence, upon taking the Stokes' theorem combined with the two above informations, we have that
\begin{eqnarray*}
0&=&\int_{V_{\epsilon}}fdiv(|W^{\nu}|^{2}\nabla f)dM_{g}\\
&=&\int_{V_{\epsilon}}div(f|W^{\nu}|^{2}\nabla f)dM_{g}-\int_{V_{\epsilon}}|W^{\nu}|^{2}|\nabla f|^{2}\\
&=&\int_{\{f=\epsilon\}}f|W^{\nu}|^{2}|\nabla f|d\sigma-\int_{V_{\epsilon}}|W^{\nu}|^{2}|\nabla f|^{2}\\
&=&-\int_{V_{\epsilon}}|W^{\nu}|^{2}|\nabla f|^{2},
\end{eqnarray*}
which implies that $W^{\nu}\equiv0$ on $V_{\epsilon}.$

In order to proceed, we recall that choosing appropriate coordinates, namely, harmonic coordinates, we conclude that the function $f$ and the metric $g$ are real analytic (for more details see, for instance, \cite[Proposition 2.8]{Corvino}). Consequently, since we already know that $W^{\nu}\equiv0$ in the open set $V_{\epsilon},$ we must obtain $W^{\nu}\equiv0$ on $M.$

Finally, using (\ref{CTW}) and the definition of tensor $T_{ijk}$ we get
\begin{eqnarray*}
|T_{ijk}|^{2}=-T_{ijk}W_{ijkl}\nabla_{l}f=-\frac{n-1}{n-2}W_{ijkl}\nabla_{l}f(\nabla_{j}fR_{ik}-\nabla_{i}fR_{jk})=0.
\end{eqnarray*}
Then $T\equiv0$ and we are in position to use Lemma~\ref{bachCotton} to conclude that $M$ is a Bach-flat manifold. So, the proof is completed.
\end{proof}

\end{document}